\documentclass[preprint,12pt]{elsarticle}
\usepackage{fullpage}
\usepackage{hyperref}
\usepackage{amssymb}
\usepackage{amsthm,bm,mathtools}
\usepackage{lineno}
\usepackage{mathrsfs}
\usepackage{amsmath}
\usepackage{color}
\usepackage{arydshln}
\usepackage{caption}
\usepackage{tikz}
\usetikzlibrary{matrix, calc, arrows.meta}

\newtheorem{thm}{Theorem}[section]
\newtheorem{lem}[thm]{Lemma}
\newtheorem{coro}[thm]{Corollary}
\newtheorem{prop}[thm]{Proposition}

\newtheorem{conj}[thm]{Conjecture}

\theoremstyle{definition}

\newtheorem{exm}[thm]{Example}
\newtheorem{rem}[thm]{Remark}
\numberwithin{equation}{section}

\makeatletter

\newcommand{\Rmnum}[1]{\expandafter\@slowromancap\romannumeral #1@}
\newcommand{\mat}[1]{\begin{bmatrix}#1\end{bmatrix}}
\makeatother
\allowdisplaybreaks[4]

\begin{document}

\begin{frontmatter}

\title{Total positivity of transformation matrices for uniform subdivisions}
%\tnotetext[mytitlenote]{xx} %foundation

\author[]{Yanxin Liu}
\ead{liu-yanxin@outlook.com}

\author[]{Jianxi Mao\corref{cor1}}
\ead{maojx@dlut.edu.cn}
\cortext[cor1]{Corresponding author}

\address{School of Mathematical Sciences, Dalian University of Technology, Dalian 116024,\\ P. R. China}

\begin{abstract}
The transformation of the $h$-vector of a finite simplicial complex under an $\mathcal{F}$-uniform subdivision is encoded by a transformation matrix.
Mu and Welker conjectured that the transformation matrix of the barycentric subdivision is totally positive.
In this paper, we give a new combinatorial proof of this conjecture.
We also prove the total positivity of the transformation matrix of the interval subdivision.
In addition, we establish a sufficient condition for the transformation matrix of a uniform subdivision to be totally positive of order $2$ (TP$_2$), thereby partially answering a question of Mu and Welker.
As an application, we show that the transformation matrix of the $r$-colored barycentric subdivision is TP$_2$.
\end{abstract}

\begin{keyword}
Totally positive matrix \sep $\mathcal{F}$-uniform subdivision \sep Neville elimination
\MSC[2020] 05E45\sep 05A20 \sep 15B48
\end{keyword}

\end{frontmatter}
%\linenumbers
%\pagewiselinenumbers
%\switchlinenumbers

%--------------------------------------------------
\section{Introduction}
%--------------------------------------------------
The $h$-vector is one of the central enumerative invariants of a finite simplicial complex.
Since Stanley's work on the enumerative theory of simplicial subdivisions~\cite{Sta92}, a fundamental problem has been to understand how the $h$-vector changes under subdivision operations. 
This viewpoint has motivated the study of $h$-vector transformations for several important families of subdivisions, including the edgewise subdivision~\cite{Ath14, BW09}, the barycentric subdivision~\cite{BW06, JN09, NPT11}, the interval subdivision~\cite{AN20, AN21} and the antiprism triangulation \cite{ABJ22, IJ03}. 
Athanasiadis~\cite{Ath22} introduced the concept of $\mathcal{F}$-uniform subdivision, which provides a unified framework for these classical operations. 
Let 
$$
h^\Delta = (h_0^\Delta, \dots, h_d^\Delta)
$$
be the $h$-vector of the $(d-1)$-dimensional finite simplicial complex $\Delta$.
Athanasiadis \cite{Ath22} showed that 
if a simplicial complex $\Delta'$ is an $\mathcal{F}$-uniform subdivision of $\Delta$, then the $h$-vector of $\Delta'$ can be expressed via a transformation matrix $H_{\mathcal{F}}$,
$$
h^{\Delta'} = H_{\mathcal{F}} h^\Delta.
$$

Mu and Welker~\cite{MW26} proved that the total positivity properties of $H_{\mathcal{F}}$ are closely related to the preservation of specific coefficient inequalities under $\mathcal{F}$-uniform subdivisions.
Following Pinkus \cite{Pin10}, 
a finite or infinite real matrix is called totally positive of order $r$ (or TP$_r$ for short), 
if its minors of all orders $\le r$ are non-negative. 
It is called totally positive (or TP for short) if its minors of all orders are non-negative. 
The theory of total positivity provides a powerful mathematical framework with fundamental applications across diverse fields such as statistics, mechanics, and representation theory~\cite{Kar68}. 
%Within this framework, the study of totally positive matrices has emerged as a central topic~\cite{Pin10}. 

There has been much interest in the total positivity of the transformation matrix $H_{\mathcal{F}}$.
For example, 
the transformation matrix $H_{\mathcal{F}}$ for the $r$-th edgewise subdivision is TP$_2$, as implied by the earlier work of Diaconis and Fulman \cite{DF09}. 
Furthermore, the second author and Wang~\cite{MW22} obtained the stronger result that $H_{\mathcal{F}}$ for the $r$-th edgewise subdivision is TP.
Mu and Welker~\cite{MW26} showed that when $\mathcal{F}$ is the barycentric subdivision, 
its transformation matrix $H_{\mathcal{F}}$ is $\mathrm{TP}_2$.
Then they raised the following conjecture.
\begin{conj}[{\cite[Conjecture 2.7]{MW26}}]\label{CON}
Let $\mathcal{F}$ be the barycentric subdivision.
Then the transformation matrix $H_{\mathcal{F}}$ is TP.
\end{conj}

Very recently, Mu, Wang and Zhu~\cite{MWZ26} settled this conjecture by verifying  sufficient conditions for total positivity. 
In this paper, we give an independent combinatorial proof of Conjecture~\ref{CON}. 
Moreover,
we prove the total positivity of the transformation matrix of the interval subdivision. 
We also establish a sufficient condition for the TP$_2$ property of transformation matrices of uniform subdivisions by the interlacing property.
As an application, we show that the transformation matrix of the $r$-colored barycentric subdivision is TP$_2$.

The rest of the paper is organized as follows. 
Section~\ref{sec2} collects 
the necessary background on Neville elimination, planar networks, and totally 
positive matrices.
Section~\ref{sec3} is devoted to the barycentric 
subdivision. 
More precisely, we obtain a bidiagonal decomposition of $H_{\mathcal{F}}$ for the barycentric subdivision and use planar networks to give a combinatorial proof of Conjecture~\ref{CON}. 
In Section~\ref{sec4}, 
we prove the total positivity of the transformation matrix of the interval subdivision.
We decompose the $H_{\mathcal{F}}$ for the interval subdivision into three TP matrices, one of which is the $H_{\mathcal{F}}$ for the barycentric subdivision.
Then the result follows by Cauchy-Binet formula.
In Section~\ref{sec5}, 
we establish a general criterion for the TP$_2$ property of transformation matrices of uniform subdivisions,  and apply it to the $r$-colored barycentric subdivision.

%--------------------------------------------------
\section{Preliminaries}
\label{sec2}
%--------------------------------------------------
Following Athanasiadis \cite{Ath22}, 
a triangulation $\Delta'$ of a simplicial complex $\Delta$ is said to be \emph{$\mathcal{F}$-uniform} if, when restricted to any two faces of $\Delta$ having the same dimension, 
the resulting triangulations have the same number of faces in every dimension. 
Equivalently, the combinatorial enumeration of the restriction depends only on the dimension of the original face, and not on the particular face chosen.
The $\mathcal{F}$-uniform subdivisions include the edgewise subdivision, 
the barycentric subdivision, 
the interval subdivision and the antiprism triangulation.
We write $\Delta_{\mathcal{F}}$ for $\Delta'$ in this case and speak of $\mathcal{F}$ as a uniform triangulation. 
The following is due to Athanasiadis~\cite{Ath22}.

\begin{lem}[{\cite[Theorem 1.1]{Ath22}}]\label{UniTrianMatrix}
Let $\Delta$ be an $(d-1)$-dimensional simplicial complex,
and $\mathcal{F}$ be a uniform triangulation of $\Delta$. 
Then there is a transformation matrix $H_{\mathcal{F}}$ such that 
\begin{equation*}
h^{\Delta_\mathcal{F}} = H_{\mathcal{F}} \, h^{\Delta}.
\end{equation*}
\end{lem}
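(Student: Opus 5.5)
The plan is to compute the $f$-vector of $\Delta_{\mathcal{F}}$ face by face in $\Delta$, and then pass to $h$-vectors by a fixed invertible change of basis. Uniformity gives a family of numbers $f_{\mathcal{F}}(k,j)$, depending only on $k$ and $j$: the number of interior $(j-1)$-dimensional faces that the triangulation of a $(k-1)$-simplex places in the relative interior of that simplex. Every face of $\Delta_{\mathcal{F}}$ lies in the relative interior of the restriction to exactly one face $F\in\Delta$, namely its carrier. Summing over carriers, I expect
\[
f_{j-1}(\Delta_{\mathcal{F}})=\sum_{k=0}^{d} f_{\mathcal{F}}(k,j)\, f_{k-1}(\Delta), \qquad 0\le j\le d,
\]
with the convention $f_{-1}=1$ and $f_{\mathcal{F}}(0,0)=1$. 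This gives a matrix $F_{\mathcal{F}}$, lower triangular in the appropriate sense (since $j\ge k$ is needed), with $f^{\Delta_{\mathcal{F}}}=F_{\mathcal{F}}f^{\Delta}$. The matrix depends only on $\mathcal{F}$ and $d$, not on $\Delta$.

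Next I will use the standard relation between $f$- and $h$-vectors:
\[
\sum_{i=0}^d h_i x^{i}=\sum_{k=0}^d f_{k-1}x^{k}(1-x)^{d-k}.
\]
This can be written as $h=Tf$ for a fixed $(d+1)\times(d+1)$ matrix $T$ with entries $T_{ik}=(-1)^{i-k}\binom{d-k}{i-k}$. The matrix $T$ is invertible, being unitriangular, with inverse $T^{-1}_{ki}=\binom{d-i}{k-i}$.

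Since $\Delta_{\mathcal{F}}$ triangulates $\Delta$, it is again $(d-1)$-dimensional, so the same $T$ applies to it. Defining $H_{\mathcal{F}}:=T F_{\mathcal{F}} T^{-1}$ then gives
\[
h^{\Delta_{\mathcal{F}}}=TF_{\mathcal{F}}f^{\Delta}=TF_{\mathcal{F}}T^{-1}h^{\Delta}.
\]

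The main obstacle is justifying the carrier decomposition together with uniformity. Concretely, I need that the restriction of $\Delta_{\mathcal{F}}$ to each face $F$ is a triangulation of the simplex $F$. I also need that the number of interior faces in each dimension depends only on $\dim F$. This follows by inclusion–exclusion over the faces of $F$: the total face counts of the restrictions are uniform by hypothesis, so the interior counts, computed recursively from the boundary faces of $F$, are uniform as well.

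Finally, I should note that $H_{\mathcal{F}}$ must depend only on $d$ and $\mathcal{F}$, not on $\Delta$. This holds by construction, since both $F_{\mathcal{F}}$ and $T$ depend only on these data.
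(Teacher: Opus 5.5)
The paper gives no proof of this lemma; it simply imports it from \cite[Theorem 1.1]{Ath22}. Your argument is correct and follows essentially the route used there: you decompose faces by carrier to get $f_{j-1}(\Delta_{\mathcal{F}})=\sum_k f_{\mathcal{F}}(k,j)\,f_{k-1}(\Delta)$, you show the interior counts are uniform by recursion over the faces of $F$, and you set $H_{\mathcal{F}}=TF_{\mathcal{F}}T^{-1}$.

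There is one slip, and it does not affect the conclusion. A $(j-1)$-face lying in a $(k-1)$-simplex forces $j\le k$, not $j\ge k$. So $F_{\mathcal{F}}$ is upper triangular, not lower. Interior faces of every dimension $\le k-1$ can occur, for example the barycenter vertex in the barycentric subdivision. You never use triangularity, so nothing breaks.

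Note also that $TF_{\mathcal{F}}T^{-1}$ establishes only existence, which is exactly what the quoted statement asserts. It does not show that the entries are nonnegative. Athanasiadis proves nonnegativity by interpreting the columns as $h$-polynomials of relative complexes, and Section 5 of the paper tacitly relies on it.
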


Throughout this paper, matrix row and column indices are counted starting from 0.
Neville elimination (NE, for short) is a Gaussian-type elimination procedure that eliminates entries below the main diagonal column by column, using elementary operations between adjacent rows. 
This method is particularly effective when working with certain matrix classes, including TP matrices (see \cite{GT04, KMRR24, MPR22}). 
We refer the reader to \cite{GP92, GP96} for further details on Neville elimination.

For a nonsingular matrix $A=[a(i,j)]_{0\le i,j\le n}$, the NE algorithm consists of $n$ steps and produces the following sequence of matrices,
\begin{equation} 
\label{seqNE}
A =: A^{(0)} \rightarrow A^{(1)} \rightarrow \cdots \rightarrow A^{(n)} = U,
\end{equation}
where $U$ is an upper triangular matrix and $A^{(k)}$ has zeros below the main diagonal in the first $k$ columns for $1 \leq k \leq n$. 
The matrix $$A^{(k+1)}=\left[a^{(k+1)}(i,j)\right]_{0\le i,j\le n}$$ is obtained from the matrix $A^{(k)}$ by using the formula
\begin{equation*}
a^{(k+1)} (i,j)= \left\{ 
\begin{array}{lcc}  
a^{(k)}(i,j)-\frac{a^{(k)}(i,k)}{a^{(k)}(i-1,k)}\,a^{(k)}(i-1,j),
\, & \textrm{if }\, \begin{aligned}
&k+1\le i\le n,
k\le j\le n,\\
&\textrm{and }\, a^{(k)}(i-1,k)\neq 0,
\end{aligned} \\ 
a^{(k)}(i,j), & \text{otherwise}.
\end{array} \right.
\end{equation*} 
The $(i,j)$ multiplier of NE of $A$, with $0\leq j\leq i \leq n$, is defined as
\begin{equation}
\label{nemulti}
m_{ij} = \left\{ \begin{array}{lcc}  
\frac{a^{(j)}(i,j)}{a^{(j)}(i-1,j)},
\quad& \textrm{if}& a^{(j)}(i-1,j) \not= 0,\\ 
0,\quad & \textrm{if}& a^{(j)}(i-1,j) = 0. 
\end{array} \right.
\end{equation}
The multipliers satisfy the following property: if $m_{i,j}=0$,
then $m_{h,j}=0$ for $h>i$. 
Using NE, Gasca and Pe{\~n}a \cite{GP96} obtained the following matrix factorization for a nonsingular matrix
\begin{equation*}
A = F_{n-1} F_{n-2} \cdots F_{0} U, 
\end{equation*}
where 
\begin{equation*}
F_{i} = \begin{bmatrix}
	1 &        &        &           &        &           & \\
	0 & 1      &        &           &        &           & \\
	& \ddots & \ddots &           &        &           & \\ 
	&        & 0      & 1         &        &           & \\
	&        &        & m_{i+1,0} & 1      &           & \\
	&        &        &           & \ddots & \ddots    & \\
	&        &        &           &        & m_{n, n-i-1} & 1
\end{bmatrix}
\end{equation*}
for $i=0,1,\ldots,n-1$. 

\begin{exm}\label{deco:A4}
Taking 
$$
A=\mat{
	1&11&11&1&0\\
	0&8&14&2&0\\
	0&4&16&4&0\\
	0&2&14&8&0\\
	0&1&11&11&1
}
$$ as an example.
Using Neville elimination,
\begin{align*}
&A =: A^{(0)}=\mat{
	1&11&11&1&0\\
	0&8&14&2&0\\
	0&4&16&4&0\\
	0&2&14&8&0\\
	0&1&11&11&1
}
\xrightarrow[m_{i,0}=0,\ 1\le i\le 4]
{r_i-0\cdot r_{i-1},\ 1\le i\le 4}
A^{(1)}=
\mat{
	1&11&11&1&0\\
	0&8&14&2&0\\
	0&4&16&4&0\\
	0&2&14&8&0\\
	0&1&11&11&1
} \\
&\xrightarrow[m_{4,1}=\frac{1}{2}]
{r_4-\frac{1}{2} r_3}
\mat{
	1&11&11&1&0\\
	0&8&14&2&0\\
	0&4&16&4&0\\
	0&2&14&8&0\\
	0&0&4&7&1
}
\xrightarrow[m_{3,1}=\frac{1}{2}]
{r_3-\frac{1}{2} r_2}
\mat{
	1&11&11&1&0\\
	0&8&14&2&0\\
	0&4&16&4&0\\
	0&0&6&6&0\\
	0&0&4&7&1
}
\xrightarrow[m_{2,1}=\frac{1}{2}]
{r_2-\frac{1}{2} r_1}
A^{(2)}=
\mat{
	1&11&11&1&0\\
	0&8&14&2&0\\
	0&0&9&3&0\\
	0&0&6&6&0\\
	0&0&4&7&1
}\\
&\xrightarrow[m_{4,2}=\frac{2}{3}]
{r_4-\frac{2}{3} r_3}
\mat{
	1&11&11&1&0\\
	0&8&14&2&0\\
	0&0&9&3&0\\
	0&0&6&6&0\\
	0&0&0&3&1
}
\xrightarrow[m_{3,2}=\frac{2}{3}]
{r_3-\frac{2}{3} r_2}
A^{(3)}=
\mat{
	1&11&11&1&0\\
	0&8&14&2&0\\
	0&0&9&3&0\\
	0&0&0&4&0\\
	0&0&0&3&1
}\\
&\xrightarrow[m_{4,3}=\frac{3}{4}]
{r_4-\frac{3}{4} r_3}
A^{(4)}=U=\mat{
	1&11&11&1&0\\
	0&8&14&2&0\\
	0&0&9&3&0\\
	0&0&0&4&0\\
	0&0&0&0&1
}.
\end{align*}
Thus, the factorization of the matrix $A$ equals
\begin{align*}
\mat{
	1& & & & \\
	&1& & & \\
	& &1& & \\
	& & &1& \\
	& & &0&1
}
\mat{
	1& & & & \\
	&1& & & \\
	& &1& & \\
	& & 0 &1& \\
	& & &\frac{1}{2}&1
}
\mat{
	1& & & & \\
	&1& & & \\
	& 0&1& & \\
	& &\frac{1}{2}&1& \\
	& & &\frac{2}{3}&1
}
\mat{
	1& & & & \\
	0&1& & & \\
	&\frac{1}{2}&1&&\\
	&&\frac{2}{3}&1&\\
	&&&\frac{3}{4}&1
}
\mat{
	1&11&11&1&0\\
	0&8&14&2&0\\
	0&0&9&3&0\\
	0&0&0&4&0\\
	0&0&0&0&1
}.
\end{align*}
\end{exm}

Planar networks provide a powerful tool for proving the total positivity of a matrix. 
Let
$(\mathcal{V}, \mathcal{A})$ be a \emph{directed graph} (or \emph{digraph}, for short) with vertex set $\mathcal{V}$ and arc set $\mathcal{A}$ 
without
loops or multiple edges.
A \emph{path} in $(\mathcal{V}, \mathcal{A})$ is a sequence
$
\pi=u_0 u_1 \cdots u_k
$
of vertices such that $(u_i,u_{i+1})\in \mathcal{A}$, for $i=0,\dots,k-1$.
In this case, we say that $\pi$ is a path from $u_0$ to $u_k$. 
The digraph is called \emph{locally finite} if, for any two vertices
$u,v\in \mathcal{V}$, the number of paths from $u$ to $v$ is finite.
If the digraph is planar, locally finite and has a non-negative weight function $w: \mathcal{A}\to \mathbb{R}^{\ge 0}$, then we call $\mathcal{D}=(\mathcal{V}, \mathcal{A}, w)$ a \emph{planar network}. 

Let $\mathcal{D}=(\mathcal{V}, \mathcal{A}, w)$ be a planar network.
The \emph{weight of a path} 
is defined to be the product of the weights of all its components. 
For $u,v\in \mathcal{V}$, let $P_D(u,v)$ be the sum of the weights of all paths from $u$ to $v$. 
Given two vertex subsets
\begin{equation*}
\mathbf{u}=(u_0, u_1,\ldots,u_n),
\quad
\mathbf{v}=(v_0, v_1, \ldots,v_n),
\end{equation*}
we say that $\mathbf{u}$ and $\mathbf{v}$ are \emph{compatible} if,
for every non-identity permutation on $\{0,1,\dots,n\}$, there are no $(n+1)$-tuples of paths from
$(u_0, u_1,\dots,u_n)$ to $(v_{\sigma(0)}, v_{\sigma(1)}, \dots,v_{\sigma(n)})$ that are non-intersecting. 
Moreover, \(\mathbf{u}\) and \(\mathbf{v}\) are called \emph{fully compatible} if
every pair of subsequences
\begin{equation*}
(u_{i_0}, u_{i_1},\dots,u_{i_s}),
\quad
(v_{j_0}, v_{j_1},\dots,v_{j_s})
\end{equation*}
is compatible for all $i_0<\cdots<i_s, \ j_0<\cdots<j_s$.

\begin{thm}[{\cite[Theorem 3.1]{Bre95}}]\label{Bre}
Let $A=\left[a(i,j)\right]_{0\le i,j\le n}$ be a real matrix. Then $A$ is TP if and only if there exists a planar network $\mathcal{D}=(\mathcal{V}, \mathcal{A}, w)$ and two vertex sequences $\mathbf{u}=(u_0, u_1,\dots,u_n),\ \mathbf{v}=(v_0, v_1, \dots,v_n)$ that are fully compatible, such that
\begin{equation*}
a(i,j)=P_D(u_i,v_j)\quad \text{for } \, 0\le i,j\le n.
\end{equation*}
\end{thm}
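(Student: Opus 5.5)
The plan is to prove the two directions separately: the ``if'' direction via the Lindström--Gessel--Viennot lemma, and the ``only if'' direction via a bidiagonal factorization of a TP matrix realized as a layered planar network.

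\emph{Sufficiency.} Suppose $a(i,j)=P_D(u_i,v_j)$ with $\mathbf{u},\mathbf{v}$ fully compatible. Fix row indices $i_0<\cdots<i_s$ and column indices $j_0<\cdots<j_s$. Since $\mathcal{D}$ is locally finite, every $P_D(u,v)$ is a finite sum, so we may apply the Lindström--Gessel--Viennot lemma:
\begin{equation*}
\det\left[a(i_k,j_l)\right]_{0\le k,l\le s}=\sum_{\sigma}\operatorname{sgn}(\sigma)\sum_{(\pi_0,\dots,\pi_s)}\prod_{k=0}^{s} w(\pi_k).
\end{equation*}
Here the inner sum runs over non-intersecting $(s+1)$-tuples of paths with $\pi_k$ going from $u_{i_k}$ to $v_{j_{\sigma(k)}}$. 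This rests on the usual sign-reversing involution that swaps tails at the first intersection point. Compatibility of the two subsequences says that no such tuple exists for $\sigma\neq \mathrm{id}$. The minor is therefore a sum of products of nonnegative weights, hence nonnegative. Since this holds for every choice of index sets, $A$ is TP.

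\emph{Necessity.} Let $A$ be TP. The first goal is a factorization
\begin{equation*}
A=L_1\cdots L_p\,D\,U_1\cdots U_q,
\end{equation*}
where each $L_r$ (resp.\ $U_r$) is an elementary lower (resp.\ upper) bidiagonal matrix $I+tE_{k+1,k}$ (resp.\ $I+tE_{k,k+1}$) with $t\ge 0$, and $D$ is diagonal with nonnegative entries.
\begin{itemize}
\item In the nonsingular case, Neville elimination as described above gives $A=F_{n-1}\cdots F_0U$. Total positivity guarantees that every multiplier $m_{ij}$ is nonnegative, since each is a ratio of consecutive-row minors. Each $F_i$ splits into a product of elementary bidiagonal factors. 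Applying the same procedure to $U^{T}$ (column Neville elimination) writes $U=DU_1\cdots U_q$ with nonnegative entries.
\item In the singular case I would invoke Cryer's theorem: every TP matrix factors as a product of TP bidiagonal matrices. Each such bidiagonal factor can then be split further into elementary ones, allowing zero diagonal entries in $D$.
\end{itemize}

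The factorization is then turned into a network. Draw $n+1$ horizontal lines, with sources $u_0,\dots,u_n$ on the left (listed bottom to top) and sinks $v_0,\dots,v_n$ on the right. Each factor becomes one vertical layer:
\begin{itemize}
\item an elementary factor contributes a single slanted arc of weight $t$ between adjacent lines $k$ and $k+1$, with all horizontal arcs of weight $1$;
\item $D$ contributes horizontal arcs weighted by its diagonal entries.
\end{itemize}
Path sums in a concatenation of networks multiply as matrices, so $P_D(u_i,v_j)=a(i,j)$. The graph is planar and acyclic with finitely many paths, hence locally finite.

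Full compatibility comes from the geometry. Every path moves monotonically left to right and the sources and sinks are ordered vertically on the two boundary lines. A family of vertex-disjoint paths from $u_{i_0},\dots,u_{i_s}$ therefore cannot cross, and by planarity it must end at the sinks in the same vertical order, i.e.\ with $\sigma=\mathrm{id}$. This ordering argument is standard once the network is layered.

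\emph{Main obstacle.} I expect the hard step to be the factorization of a possibly singular TP matrix, where Neville elimination can hit zero pivots. If Cryer's result may not be cited, I would try approximating $A$ by nonsingular TP matrices, for instance $A_\varepsilon = G_\varepsilon A G_\varepsilon$ with a totally positive Gaussian kernel $G_\varepsilon$. The difficulty is that the bidiagonal parameters need not converge as $\varepsilon\to0$, so it would be cleaner to use the zero-pattern property of the multipliers and argue directly by induction on $n$.
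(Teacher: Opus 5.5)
Your route is the one behind Brenti's proof, which the paper cites rather than reproves: Lindstr\"om--Gessel--Viennot for sufficiency, and for necessity a factorization of $A$ into nonnegative bidiagonal matrices (Cryer), realized as a layered network in which vertex-disjoint path families preserve vertical order. The sufficiency half is fine as written. Local finiteness forces the digraph to be acyclic, so the involution argument applies. Your order-preservation argument for full compatibility is also fine.

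One step of the necessity half fails as stated. The normal form $A=L_1\cdots L_p\,D\,U_1\cdots U_q$, with unit elementary factors and a single diagonal $D$, need not exist when $A$ is singular. Since $L_1\cdots L_p$ is unit lower triangular and $U_1\cdots U_q$ is unit upper triangular, you get $a(0,0)=d_0$ and $a(i,0)=(L_1\cdots L_p)(i,0)\,d_0$. So $a(0,0)=0$ forces the whole first column to vanish, and the TP matrix $\left[\begin{smallmatrix}0&0\\1&0\end{smallmatrix}\right]$ has no such factorization.

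Correspondingly, splitting a bidiagonal factor with zero diagonal entries into elementary ones can require several interspersed diagonal factors. For example, $\left[\begin{smallmatrix}0&0\\1&0\end{smallmatrix}\right]=\operatorname{diag}(0,1)\,(I+E_{1,0})\,\operatorname{diag}(1,0)$, and no product with only one diagonal factor works. So ``allowing zero diagonal entries in $D$'' does not rescue a single $D$.

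The fix is simply not to split. Insert each nonnegative bidiagonal factor from Cryer's theorem as one layer, with horizontal arcs weighted by its diagonal entries and slanted arcs by its off-diagonal entries. This is exactly the construction the paper describes right after the theorem. Your path-sum and compatibility arguments then go through unchanged. The Neville-elimination branch becomes superfluous; as written it would also need a separate argument that $U$ is again TP before you may run elimination on $U^{T}$.
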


Using the Lindstr{\"o}m--Gessel--Viennot lemma, Brenti constructed the weighted digraph of a matrix when it has a bidiagonal decomposition. 
We refer the reader to the proof of Theorem 3.1 in~\cite{Bre95}. 
Suppose a matrix $A$ has a bidiagonal decomposition
\begin{equation*}
A=W_0 W_1 \cdots W_{t+m},
\end{equation*}
for any $t,m\ge0$,
where
\begin{equation*}
W_i=
\begin{bmatrix}
a_0^{(i)}  &  &  &  & \\
b_1^{(i)}  & a_1^{(i)} &  &  & \\
& \ddots & \ddots &  & \\
&  & b_{n-1}^{(i)} & a_{n-1}^{(i)} & \\
&  &  & b_n^{(i)} & a_n^{(i)}
\end{bmatrix}
\end{equation*} 
for $i=0,1,\dots,t$, and
\begin{equation*}
W_i=
\begin{bmatrix}
a_0^{(i)}  & b_0^{(i)} &  &  & \\
& a_1^{(i)} & b_1^{(i)} &  & \\
&   & \ddots & \ddots & \\
&  &   & a_{n-1}^{(i)} &  b_{n-1}^{(i)} \\
&  & &  & a_n^{(i)}
\end{bmatrix}
\end{equation*}
for $i=t+1,t+2,\dots,t+m$. 
The planar network $\mathcal{D}=(\mathcal{V}, \mathcal{A}, w)$ associated to $A$ is constructed as follows.
\begin{itemize}
\item Vertex set $\mathcal{V}=\{0,1,\dots, t+m+1\} \times \{0,1,\dots,n\}$.
\item For every vertex $(i, j) \in \{0,1,\dots, t+m\} \times \{0,1,\dots,n\}$, we add a directed edge to $(i+1, j)$ with weight $a_j^{(i)}$, 
add a directed edge from $(i,j)$ to $(i+1,j-1)$
(respectively, $(i+ 1,j+1)$) with weight $b_j^{(i)}$ for $(i,j) \in \{0,1,\dots, t\} \times \{0,1,\dots,n\}$ (respectively, $(i,j) \in \{t+1,\dots, t+m\} \times \{0,1,\dots,n\}$).
\end{itemize}
Let $\mathbf{u}=((0,0), (0,1), \dots, (0,n))$ and $\mathbf{v}=((t+m+1, 0), \dots, (t+m+1, n))$. 
It is clear that $\mathcal{D}$ is a weighted digraph and $\mathbf{u}$ and $\mathbf{v}$ are fully compatible. 
Suppose that all the entries in $W_i$ are non-negative for all $i$.
Then by Theorem~\ref{Bre}, the matrix $A$ is TP. 
For the case $n=t=m=2$, see Figure~\ref{fig:1}.

\begin{figure}[htbp] % htbp 表示排版位置建议
\centering       % 关键居中命令
\begin{tikzpicture}[
scale=1.6,
every node/.style={font=\large},
dot/.style={circle, fill=black, inner sep=2pt},
lbl/.style={font=\scriptsize}
]

\tikzset{
edge/.style={
	line width=1.5pt,
	-{Latex[length=2mm,width=2mm]},
	shorten <=1pt,
	shorten >=4pt
}
}

% 顶点坐标
\foreach \x in {0,...,5}{
\foreach \y in {0,1,2}{
	\coordinate (v\x\y) at (\x,\y);
}
}

% y = 2
\draw[edge] (v02) -- (v12) node[lbl, midway, above=-3pt] {$a_2^{(0)}$};
\draw[edge] (v12) -- (v22) node[lbl, midway, above=-3pt] {$a_2^{(1)}$};
\draw[edge] (v22) -- (v32) node[lbl, midway, above=-3pt] {$a_2^{(2)}$};
\draw[edge] (v32) -- (v42) node[lbl, midway, above=-3pt] {$a_2^{(3)}$};
\draw[edge] (v42) -- (v52) node[lbl, midway, above=-3pt] {$a_2^{(4)}$};

% y = 1
\draw[edge] (v01) -- (v11) node[lbl, midway, above=-3pt] {$a_1^{(0)}$};
\draw[edge] (v11) -- (v21) node[lbl, midway, above=-3pt] {$a_1^{(1)}$};
\draw[edge] (v21) -- (v31) node[lbl, midway, above=-3pt] {$a_1^{(2)}$};
\draw[edge] (v31) -- (v41) node[lbl, midway, above=-3pt] {$a_1^{(3)}$};
\draw[edge] (v41) -- (v51) node[lbl, midway, above=-3pt] {$a_1^{(4)}$};

% y = 0
\draw[edge] (v00) -- (v10) node[lbl, midway, above=-3pt] {$a_0^{(0)}$};
\draw[edge] (v10) -- (v20) node[lbl, midway, above=-3pt] {$a_0^{(1)}$};
\draw[edge] (v20) -- (v30) node[lbl, midway, above=-3pt] {$a_0^{(2)}$};
\draw[edge] (v30) -- (v40) node[lbl, midway, above=-3pt] {$a_0^{(3)}$};
\draw[edge] (v40) -- (v50) node[lbl, midway, above=-3pt] {$a_0^{(4)}$};

% --------------------
% 斜边标签
% --------------------

\node[font=\scriptsize, above=0pt] at (0.65,1.4) {$b_2^{(0)}$};
\node[font=\scriptsize, above=0pt] at (1.65,1.4) {$b_2^{(1)}$};
\node[font=\scriptsize, above=0pt] at (2.65,1.4) {$b_2^{(2)}$};
\node[font=\scriptsize, above=0pt] at (3.4,1.4) {$b_1^{(3)}$};
\node[font=\scriptsize, above=0pt] at (4.4,1.4) {$b_1^{(4)}$};

\node[font=\scriptsize, above=0pt] at (0.65,0.4) {$b_1^{(0)}$};
\node[font=\scriptsize, above=0pt] at (1.65,0.4) {$b_1^{(1)}$};
\node[font=\scriptsize, above=0pt] at (2.65,0.4) {$b_1^{(2)}$};
\node[font=\scriptsize, above=0pt] at (3.4,0.4) {$b_0^{(3)}$};
\node[font=\scriptsize, above=0pt] at (4.4,0.4) {$b_0^{(4)}$};

\draw[edge] (v02) -- (v11);
\draw[edge] (v12) -- (v21);
\draw[edge] (v22) -- (v31);
\draw[edge] (v01) -- (v10);
\draw[edge] (v11) -- (v20);
\draw[edge] (v21) -- (v30);
\draw[edge] (v30) -- (v41);
\draw[edge] (v40) -- (v51);
\draw[edge] (v31) -- (v42);
\draw[edge] (v41) -- (v52);

% 顶点
\foreach \x in {0,...,5}{
\foreach \y in {0,1,2}{
	\node[dot] at (v\x\y) {};
}
}

% 坐标标签
\node[font=\small, left=2pt] at (v02) {$(0,2)$};
\node[font=\small, left=2pt] at (v01) {$(0,1)$};
\node[font=\small, left=2pt] at (v00) {$(0,0)$};
\node[font=\small, right=2pt] at (v52) {$(5,2)$};
\node[font=\small, right=2pt] at (v51) {$(5,1)$};
\node[font=\small, right=2pt] at (v50) {$(5,0)$};
\end{tikzpicture}
\captionsetup{labelsep=none}
\caption{} % 图片标题
\label{fig:1}
\end{figure}

%---------------------------------------------------------------------
\section{Total positivity of the transformation matrix of the barycentric subdivision}
\label{sec3}
%---------------------------------------------------------------------
In this section, we present the bidiagonal decomposition of $H_{\mathcal{F}}$ of the barycentric subdivision by Neville elimination.
Then we give a combinatorial proof of the total positivity of $H_{\mathcal{F}}$ by a planar network.
First we present the explicit expression of the entries of $H_{\mathcal{F}}$,
which can be obtained in~\cite[Theorem 1 and Proof of Theorem 2]{BW06}.

\begin{lem}
Let $\mathcal{F}$ be the barycentric subdivision of $(d-1)$-dimensional simplicial complexes. 
Then we have 
\begin{equation}
\label{exbar}
H_\mathcal{F} = \left[ \sum_{r=0}^{i} r^j (r+1)^{d-j} (-1)^{i-r} \binom{d+1}{i-r} \right ]_{0 \le i,j \le d}.
\end{equation}
\end{lem}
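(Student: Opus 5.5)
To prove the formula for $H_\mathcal{F}$, I would first reduce it to a single simplex and then do a generating-function computation. By the uniformity in Lemma~\ref{UniTrianMatrix}, the $j$-th column of $H_\mathcal{F}$ is determined by how the $f$-vector transforms. Recall that $f$ and $h$ are related by
\[
\sum_i h_i x^{d-i}=\sum_k f_{k-1}(x-1)^{d-k}.
\]
Each $(k-1)$-face of $\Delta$ contributes to the barycentric subdivision exactly the interior faces of the barycentric subdivision of a $(k-1)$-simplex. These interior faces correspond to chains $\emptyset\neq S_1\subsetneq\cdots\subsetneq S_m=[k]$, and their number is counted by ordered set partitions (surjections) of $[k]$. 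Hence the $f$-polynomial of $\Delta_\mathcal{F}$ is a linear combination of the $f_{k-1}$ with coefficients given by surjection counts. This means it suffices to verify the identity on a basis, for instance the $h$-vectors of the complexes $\Delta_j$ whose $h$-polynomial is $x^j$. Alternatively, one can work purely formally with the linear map.

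The cleanest route is the Brenti--Welker formula~\cite[Theorem 1]{BW06}. It expresses $h^{\Delta_\mathcal{F}}$ through
\[
\frac{\sum_i h_i^{\Delta_\mathcal{F}}x^i}{(1-x)^{d+1}}=\sum_{r\ge0}\Bigl(\sum_{j} h_j^\Delta\, r^j (r+1)^{d-j}\Bigr)x^r,
\]
which one derives from $\sum_k f_{k-1}\,(\text{number of chains ending at }[k]\text{ of length}\le r\text{ with weights})$. The key ingredient there is that the number of multichains $\emptyset\subseteq T_1\subseteq\cdots\subseteq T_r\subseteq F$ in the face lattice of a $(k-1)$-face equals $(r+1)^k$. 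I would prove the displayed identity as follows. First, compute $\sum_r \#\{\text{multichains of length } r \text{ in } \Delta_\mathcal{F}\}x^r$ in two ways. On one side, the $h$-vector of any complex satisfies $\sum_r\bigl(\sum_k f_{k-1}\binom{r}{k}\bigr)x^r=\sum_i h_ix^i/(1-x)^{d+1}$, up to the standard shift. On the other side, multichains in the face poset of $\Delta$ give $\sum_k f_{k-1}\, r^{k}$-type counts, and these convert to $\sum_j h_j r^j(r+1)^{d-j}$. That conversion is the identity $\sum_k f_{k-1} r^{k}\cdots$ rewritten via $f$--$h$, namely $\sum_k f_{k-1}y^k(1+y)^{d-k}$ type expressions evaluated at $y=r$.

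Finally, I multiply both sides by $(1-x)^{d+1}=\sum_s(-1)^s\binom{d+1}{s}x^s$ and extract the coefficient of $x^i$. This gives
\[
h_i^{\Delta_\mathcal{F}}=\sum_j h_j^\Delta\sum_{r=0}^i r^j(r+1)^{d-j}(-1)^{i-r}\binom{d+1}{i-r},
\]
which is exactly \eqref{exbar}. Since the coefficients of $h^\Delta_j$ are independent of $\Delta$ and the $h$-vectors of complexes span $\mathbb{R}^{d+1}$, the matrix is uniquely determined.

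The main obstacle is the middle step: correctly matching the multichain count in $\Delta_\mathcal{F}$, which counts chains of faces of $\Delta$, with $\sum_j h_j r^j(r+1)^{d-j}$. The bookkeeping requires care on two points: whether $\emptyset$ is included in the chain, and whether the result is $r^j(r+1)^{d-j}$ rather than $(r-1)^j r^{d-j}$. I would settle these conventions by checking the case $d=1$ and against Example~\ref{deco:A4}, whose $d=4$ matrix should be reproduced.
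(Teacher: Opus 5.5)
Your proposal is correct and follows the same route as the paper, which gives no independent proof but simply invokes the Brenti--Welker generating-function identity \cite{BW06} and extracts the coefficient of $x^i$ after multiplying by $(1-x)^{d+1}$, exactly as you do. Your sketched derivation of that identity also works, with no index shift: the multichains $G_1\subseteq\cdots\subseteq G_r$ of faces of $\Delta$ (empty face allowed) number $\sum_k f_{k-1}(\Delta_\mathcal{F})\binom{r}{k}$, while grouping them by the top face $G_r$ gives $\sum_k f_{k-1}(\Delta)\,r^k=\sum_j h_j^\Delta r^j(r+1)^{d-j}$.
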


\begin{thm}\label{key}
Let $\mathcal{F}$ be the barycentric subdivision of $(d-1)$-dimensional simplicial complexes. 
Then the transformation matrix $H_\mathcal{F}$ has a bidiagonal decomposition
$$
H_\mathcal{F}=W_{0} W_{1} \cdots W_{d-1} W_d \cdots W_{2d-1},
$$
where
\begin{equation*}
W_i =
\begin{tikzpicture}[baseline=(M.center)]
\matrix (M) [
matrix of math nodes,
nodes in empty cells,
left delimiter={[},
right delimiter={]},
column sep=0.7em,
row sep=0.35em,
nodes={
anchor=center,
inner sep=0pt,
minimum width=1.1em,
minimum height=1.05em,
font=\scriptsize
}
] {
1&       &        &   &             &        &      \\
d-i-1 & 2 &   &   &             &        &      \\
&  \ddots  & \ddots   &   &             &        &      \\
&        &    1    & d-i &   &        &      \\
&        &        &  0 & 1      &  &      \\
&        &        &   &    \ddots         & \ddots    &   \\
&        &        &   &             &     0   & 1\\
};

\node[font=\scriptsize] at ($(M-1-4.north)+(0,10pt)$) {$d-i-1$};
\node[font=\scriptsize, anchor=west] at ($(M-4-7.east)+(15pt,0)$) {$d-i-1$};
\end{tikzpicture},
\end{equation*}
for $i=0,1,\dots,d-1$, and
\begin{equation*}
W_{d+j} =
\begin{tikzpicture}[baseline=(M.center)]
\matrix (M) [
matrix of math nodes,
nodes in empty cells,
left delimiter={[},
right delimiter={]},
column sep=0.7em,
row sep=0.35em,
nodes={
anchor=center,
inner sep=0pt,
minimum width=1.1em,
minimum height=1.05em,
font=\scriptsize
}
] {
1&   0    &        &   &             &        &      \\
& \ddots & \ddots &   &             &        &      \\
&        & 1      & 0 &             &        &      \\
&        &        & 1 & \frac{1}{2} &        &      \\
&        &        &   & \ddots      & \ddots &      \\
&        &        &   &             & 1      &  \frac{d-j-1}{d-j}   \\
&        &        &   &             &        & 1\\
};

\node[font=\scriptsize] at ($(M-1-4.north)+(0,10pt)$) {$j+1$};
\node[font=\scriptsize, anchor=west] at ($(M-4-7.east)+(15pt,0)$) {$j+1$};
\end{tikzpicture}
\end{equation*}
for $j=0,1,\dots,d-1$. 
\end{thm}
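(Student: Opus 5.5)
The plan is to read the statement as an $LU$ factorization $H_\mathcal{F}=LU$, with
$$L:=W_0W_1\cdots W_{d-1}\ \text{(lower triangular)},\qquad U:=W_d\cdots W_{2d-1}\ \text{(upper unitriangular)},$$
and to prove it by showing that $HU^{-1}$ is lower triangular and equal to $L$.

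First I record what the factors look like.

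\emph{The lower factors.} In $W_i$ the diagonal entry in row $r$ is $r+1$ for $0\le r\le d-i-1$, and the subdiagonal entry in row $r$ is $d-i-r$ for $1\le r\le d-i-1$. All other rows are those of the identity. Row $r$ of $W_i$ carries the factor $r+1$ exactly when $i\le d-1-r$, which happens for $d-r$ values of $i$. Hence the diagonal of $L$ is $\big((r+1)^{d-r}\big)_r$. For $d=4$ this gives $1,8,9,4,1$, matching $U$ in Example~\ref{deco:A4}, which is a useful sanity check.

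\emph{The upper factors.} $W_{d+j}$ has superdiagonal entry $k/(k+1)$ in position $(j+k,j+k+1)$ for $1\le k\le d-j-1$, and zeros elsewhere off the diagonal.

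Since $H_\mathcal{F}$ is nonsingular (as the computation will confirm), its $LU$ factorization with unit upper factor is unique. It therefore suffices to show that $HU^{-1}$ is lower triangular and equals $L$.

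\emph{Passing to polynomials.} I will use the operator $T$ sending a polynomial $f$ of degree $\le d$ to the coefficient vector of $(1-x)^{d+1}\sum_{r\ge0}f(r)x^r$, which is a polynomial of degree $\le d$ in $x$. By \eqref{exbar}, column $j$ of $H_\mathcal{F}$ is $T(f_j)$ with $f_j(r)=r^j(r+1)^{d-j}$. Because $T$ is linear, right multiplication by $U^{-1}=W_{2d-1}^{-1}\cdots W_d^{-1}$ amounts to replacing the basis $(f_j)$ by
$$g_m=\sum_j f_j\,(U^{-1})_{jm}.$$
Each inverse $W_{d+j}^{-1}$ is again upper bidiagonal, with entries $-k/(k+1)$. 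Each step replaces a column by $f_{m}-c\,f_{m-1}$. Here $r-c(r+1)$ with $c=k/(k+1)$ equals $\frac{1}{k+1}(r-k)$, which introduces a linear factor vanishing at $r=k$.

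I expect that after all $d$ steps, $g_m(r)$ equals a constant times $r(r-1)\cdots(r-m+1)\,(r+1)^{d-m}$, or some similar product of degree $d$ that vanishes at $r=0,1,\dots,m-1$. I will prove this by induction on the number of upper factors applied, tracking which columns have been modified at each stage.

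Once $g_m(r)=0$ for $r<m$, the series $\sum_r g_m(r)x^r$ is divisible by $x^m$, so $T(g_m)$ is supported in rows $\ge m$. This is exactly the statement that $HU^{-1}$ is lower triangular.

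\emph{Identifying the lower factor.} Next I compute $L$ column by column in the same language. Left multiplication by a lower bidiagonal matrix acts on generating functions in $x$ as $p(x)\mapsto$ (diagonal part) $+\,x\cdot$(shifted subdiagonal part). So $L$ can be described by a recursion on $d$ or on $i$. I would compare this with the recursion for $T(g_m)$ obtained by writing
$$(r+1)^{d-m}=(r+1)\cdot(r+1)^{d-m-1}$$
and using the Eulerian-type identity $\sum_r (r+1)q(r)x^r=\frac{d}{dx}\big(x\sum_r q(r)x^r\big)$. The key step is to show that $T(g_m)$ satisfies the same bidiagonal recursion as column $m$ of $W_0\cdots W_{d-1}$. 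A cleaner route may be induction on $d$: show that $H^{(d)}$ is obtained from $H^{(d-1)}$, bordered by an extra row and column, via $W_0$ on the left and $W_{2d-1}$, or a relabelled extra upper factor, on the right.

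\emph{Main obstacle.} I expect the hardest part to be pinning down the exact closed form of $g_m$ after all $d$ upper factors, and then matching the $d$-fold product of lower bidiagonals with an explicit formula. This includes the constants, since the diagonal of $L$ must come out as $(m+1)^{d-m}$. I would first verify the conjectured form of $g_m$ against Example~\ref{deco:A4} with $d=4$, and only then carry out the general induction.
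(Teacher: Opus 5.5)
There is a genuine error in the first half of your plan. The inverse of a unit upper bidiagonal matrix is \emph{not} bidiagonal once two consecutive superdiagonal entries are nonzero, because $(I+N)^{-1}=I-N+N^2-\cdots$. Write $E_{pq}$ for the matrix units. For $d=3$ you have $W_3=I+\tfrac12E_{12}+\tfrac23E_{23}$, and $W_3^{-1}=I-\tfrac12E_{12}-\tfrac23E_{23}+\tfrac13E_{13}$. Suppose you nevertheless update $f_m\mapsto f_m-c\,f_{m-1}$ using the \emph{old} column $m-1$, which is what right multiplication by your bidiagonal ``inverse'' does. Then after $W_4^{-1}$ and $W_3^{-1}$ the last column is $T_3(\tilde g_3)$ with $\tilde g_3(r)=r^3-\tfrac76r^2(r+1)=-\tfrac16r^2(r+7)$. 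This does not vanish at $r=1,2$, so the result is not lower triangular, and your announced induction already fails at $d=3$. The factor $r-\frac{k}{k+1}(r+1)$ only helps when the two columns being combined are $\varphi(r)\,r$ and $\varphi(r)(r+1)$ for the same $\varphi$; simultaneous old-column updates destroy this.

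The correct action of $W_{d+j}^{-1}$ is recursive. Since $Y=XW_{d+j}^{-1}$ means $X=YW_{d+j}$, you get $Y_m=X_m-\frac{m-j-1}{m-j}Y_{m-1}$ for $m=j+2,\dots,d$, in increasing order. With this, downward induction on $j$ shows the following: after applying $W_{2d-1}^{-1},\dots,W_{d+j}^{-1}$, column $m$ is $T_d\bigl(\binom{r}{m-j}r^j(r+1)^{d-m}\bigr)$ for $m\ge j$ and $T_d(f_m)$ for $m<j$. At $j=0$ this gives $g_m=\binom rm(r+1)^{d-m}$.

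Equivalently, refactor $U^{-1}=E_0^T\cdots E_{d-1}^T$, where $E_k^T$ has $-\frac{k}{k+1}$ in \emph{every} position $(m-1,m)$ with $m\ge k+1$. These are the genuine elimination steps, and this is exactly the transpose of Lemma~\ref{NEofAd}. The factors $W_{d+j}$ instead collect the Neville multipliers along diagonals (the Gasca--Pe\~na arrangement), which is why their individual inverses are not bidiagonal.

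The second half, identifying $HU^{-1}$ with $L=W_0\cdots W_{d-1}$, is only announced, not carried out. Your Eulerian identity does supply it. Suppose $\sum_r q(r)x^r=p(x)/(1-x)^d$ with $\deg p\le d-1$. Then $\sum_r(r+1)q(r)x^r=\bigl(x\,p(x)(1-x)^{-d}\bigr)'$, and comparing coefficients gives $T_d\bigl((r+1)q\bigr)_i=(i+1)\,T_{d-1}(q)_i+(d-i)\,T_{d-1}(q)_{i-1}$. This is exactly the paper's recurrence \eqref{recu_u}.

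Let $L_d$ be the matrix whose columns are $T_d\bigl(\binom rm(r+1)^{d-m}\bigr)$. Apply the recurrence with $q=\binom rm(r+1)^{d-1-m}$ for $m\le d-1$, and note that $T_d(\binom rd)=e_d$. This gives $L_d=\mathrm{diag}(B_d^T,1)\,\mathrm{diag}(L_{d-1},1)$. Now $\mathrm{diag}(B_d^T,1)=W_0^{(d)}$ and $\mathrm{diag}(W_i^{(d-1)},1)=W_{i+1}^{(d)}$, where the superscript records the dimension. Induction on $d$ then yields $L_d=W_0\cdots W_{d-1}$.

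With these two repairs your argument becomes the paper's proof read in transposed form. You use column operations on $H_\mathcal{F}$ where the paper uses Neville row operations on $A_d$, and a recursion for $L_d=U_d^T$ where the paper uses \eqref{recu_Ud}.
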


Using Brenti's construction, we give a planar network associated with the transformation matrix $H_\mathcal{F}$. 
See Figure~\ref{fig:2}. 
By Theorem~\ref{Bre}, we provide an alternative combinatorial proof of Conjecture~\ref{CON}.

\begin{coro}\label{TP-sub}
Let $\mathcal{F}$ be the barycentric subdivision of $(d-1)$-dimensional simplicial complexes.
Then the transformation matrix $H_{\mathcal{F}}$ is totally positive.
\end{coro}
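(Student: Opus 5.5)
The plan is to deduce Corollary~\ref{TP-sub} directly from Theorem~\ref{key} together with Brenti's planar-network construction and Theorem~\ref{Bre}. Theorem~\ref{key} writes $H_{\mathcal F}=W_0W_1\cdots W_{2d-1}$. The first $d$ factors $W_0,\dots,W_{d-1}$ are lower bidiagonal and the last $d$ factors $W_d,\dots,W_{2d-1}$ are upper bidiagonal. This is exactly the shape $W_0\cdots W_{t+m}$ treated after Theorem~\ref{Bre}, with $t=d-1$ and $m=d$, and with matrix size $n=d$.

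The first step is to check that every entry of every factor is non-negative. For $i=0,\dots,d-1$, the diagonal entries of $W_i$ are positive integers. Its subdiagonal entries are either $0$ or integers of the form $d-i-k$ for $k\ge 1$, down to $1$; these are non-negative because the displayed pattern only places them in the range where they stay $\ge 1$. For $W_{d+j}$, the diagonal is all ones and the superdiagonal entries are $0$ or $\tfrac{k-1}{k}$ for $2\le k\le d-j$, which are again non-negative.

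The second step is to build the planar network of Figure~\ref{fig:2}. The vertices are $\{0,\dots,2d\}\times\{0,\dots,d\}$. Each $(i,j)$ gets a horizontal arc to $(i+1,j)$ weighted by the $(j,j)$ entry of $W_i$. For $i\le d-1$ it also gets a down-step arc to $(i+1,j-1)$, and for $i\ge d$ an up-step arc to $(i+1,j+1)$, each weighted by the corresponding off-diagonal entry. Arcs of weight $0$ may be deleted.

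The third step is to verify the path expansion. A path from $(0,i)$ to $(2d,j)$ chooses, at each layer $\ell$, one nonzero entry of $W_\ell$ in a consistent row/column chain. So $P_{\mathcal D}((0,i),(2d,j))$ equals the $(i,j)$ entry of the product, which is $H_{\mathcal F}(i,j)$.

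The network is planar and acyclic, since every arc moves one layer to the right, so it is locally finite. The source sequence $\mathbf u=((0,0),\dots,(0,d))$ and the sink sequence $\mathbf v=((2d,0),\dots,(2d,d))$ are ordered bottom to top on the two boundary lines. Non-intersecting path families in a planar graph between such boundary-ordered sequences can only realize the identity permutation, so $\mathbf u,\mathbf v$ are fully compatible. Theorem~\ref{Bre} then gives that $H_{\mathcal F}$ is TP. Alternatively, each $W_\ell$ is a non-negative bidiagonal matrix and hence TP, and the Cauchy--Binet formula shows that a product of TP matrices is TP.

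All of the real work lies in Theorem~\ref{key}, which is assumed here. The only step that needs care is the sign check: one must make sure no subdiagonal entry such as $d-i-1$ becomes negative, in particular at $i=d-1$, where it equals $0$.
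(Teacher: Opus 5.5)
Your proposal is correct and follows the paper's own argument. The paper obtains the corollary by taking the bidiagonal factorization of Theorem~\ref{key}, whose factors all have non-negative entries, feeding it into Brenti's planar-network construction (Figure~\ref{fig:2}) with $t=d-1$, $m=d$ exactly as you set up, and invoking Theorem~\ref{Bre}. Your Cauchy--Binet alternative, in which each non-negative bidiagonal factor is TP, is also valid and is just an algebraic repackaging of the same fact.
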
 

\begin{figure}[htbp] % htbp 表示排版位置建议
\centering       % 关键居中命令
\begin{tikzpicture}[
x=1.2cm, y=1.0cm,
thick,
font=\scriptsize
]

\tikzset{
n/.style={circle, fill=black, inner sep=1.8pt},
edge/.style={
line width=1pt,
-{Latex[length=1.2mm,width=2mm]},
shorten <=1pt,
shorten >=4pt
}
}

% ---------------- 绘制水平实线 ----------------
\foreach \y in {0, 1, 2, 4, 5, 6} {
% 左边区域
\draw[edge] (0,\y) -- (1,\y);
\draw[edge] (1,\y) -- (2,\y);

% 中间区域
\draw[edge] (3.6,\y) -- (4.6,\y);
\draw[edge] (4.6,\y) -- (5.6,\y);
\draw[edge] (5.6,\y) -- (6.6,\y);

% 右边区域
\draw[edge] (8.2,\y) -- (9.2,\y);
\draw[edge] (9.2,\y) -- (10.2,\y);
}

% ---------------- 绘制水平省略号 ----------------
\node at (2.8,0) {$\cdots$};
\node at (2.8,6) {$\cdots$};

\node at (7.4,0) {$\cdots$};
\node at (7.4,6) {$\cdots$};

% ---------------- 绘制所有实心节点 ----------------
\foreach \y in {0, 1, 2, 4, 5, 6} {
% 左侧 3 列点
\foreach \x in {0, 1, 2} {
\node[n] at (\x,\y) {};
}

% 中间 4 列点
\foreach \x in {3.6, 4.6, 5.6, 6.6} {
\node[n] at (\x,\y) {};
}

% 右侧 3 列点
\foreach \x in {8.2, 9.2, 10.2} {
\node[n] at (\x,\y) {};
}
}

% ---------------- 绘制行标签 ----------------
\node[left] at (-0.2,6) {$u_d$};
\node[left] at (-0.2,5) {$u_{d-1}$};
\node[left] at (-0.2,4) {$u_{d-2}$};
\node[left] at (-0.2,3) {$\vdots$};
\node[left] at (-0.2,2) {$u_2$};
\node[left] at (-0.2,1) {$u_1$};
\node[left] at (-0.2,0) {$u_0$};

\node[right] at (10.2,6) {$v_d$};
\node[right] at (10.2,5) {$v_{d-1}$};
\node[right] at (10.2,4) {$v_{d-2}$};
\node[right] at (10.2,3) {$\vdots$};
\node[right] at (10.2,2) {$v_2$};
\node[right] at (10.2,1) {$v_1$};
\node[right] at (10.2,0) {$v_0$};

% ---------------- 绘制区域间的垂直省略号 ----------------
\node at (0.5,3) {$\vdots$};
\node at (1.5,3) {$\vdots$};
\node at (5.1,3) {$\vdots$};
\node at (9.7,3) {$\vdots$};

% ---------------- 绘制对角连线及权重标签 ----------------

% 左侧区域第 1 段
\draw[edge] (0,5) -- (1,4);
\draw[edge] (0,2) -- (1,1) node[midway, above right, inner sep=0pt] {\textcolor{blue}{$d-2$}};
\draw[edge] (0,1) -- (1,0) node[midway, above right, inner sep=0pt] {\textcolor{blue}{$d-1$}};
\draw[edge] (1,1) -- (2,0) node[midway, above right, inner sep=0pt] {\textcolor{blue}{$d-2$}};
\draw[edge] (1,2) -- (2,1) node[midway, above right, inner sep=0pt] {\textcolor{blue}{$d-3$}};

\node at (0.5,1.15) {\textcolor{red}{$2$}};
\node at (1.5,1.15) {\textcolor{red}{$2$}};
\node at (1.5,2.15) {\textcolor{red}{$3$}};
\node at (0.5,2.15) {\textcolor{red}{$3$}};
\node at (0.5,4.15) {\textcolor{red}{$d-1$}};
\node at (0.5,5.15) {\textcolor{red}{$d$}};

% 中间区域第 1 段
\draw[edge] (3.6,1) -- (4.6,0) node[midway, above right, inner sep=0pt] {\textcolor{blue}{$2$}};
\draw[edge] (3.6,2) -- (4.6,1);

\node at (4.1,1.15) {\textcolor{red}{$2$}};
\node at (4.1,2.15) {\textcolor{red}{$3$}};

% 中间区域第 2 段
\draw[edge] (4.6,1) -- (5.6,0);
\node at (5.2,1.15) {\textcolor{red}{$2$}};

% 中间区域第 3 段
\draw[edge] (5.6,5) -- (6.6,6) node[midway, below right, inner sep=0pt] {\textcolor{blue}{$\frac{d-1}{d}$}};
\draw[edge] (5.6,4) -- (6.6,5) node[midway, below right, inner sep=0pt] {\textcolor{blue}{$\frac{d-2}{d-1}$}};
\draw[edge] (5.6,1) -- (6.6,2) node[midway, below right, inner sep=0pt] {\textcolor{blue}{$\frac{1}{2}$}};

% 右侧区域
\draw[edge] (9.2,5) -- (10.2,6) node[midway, below right, inner sep=0pt] {\textcolor{blue}{$\frac{1}{2}$}};
\draw[edge] (8.2,5) -- (9.2,6) node[midway, below right, inner sep=0pt] {\textcolor{blue}{$\frac{2}{3}$}};
\draw[edge] (8.2,4) -- (9.2,5) node[midway, below right, inner sep=0pt] {\textcolor{blue}{$\frac{1}{2}$}};

% ---------------- 绘制斜向对角线延长线上的点 ----------------
\foreach \dy in {0, 1, 2} {
\filldraw (3.2, 1.6 + \dy) circle (0.7pt);
\filldraw (3.0, 1.8 + \dy) circle (0.7pt);
\filldraw (2.8, 2.0 + \dy) circle (0.7pt);
}

\foreach \dy in {0, 1, 2} {
\filldraw (7.7, 4.5 - \dy) circle (0.7pt);
\filldraw (7.5, 4.3 - \dy) circle (0.7pt);
\filldraw (7.3, 4.1 - \dy) circle (0.7pt);
}

\end{tikzpicture}
\caption{The Planar network for $H_{\mathcal{F}}$ of the barycentric subdivision} % 图片标题
\label{fig:2}
\end{figure}

In the following, we give a proof of Theorem~\ref{key}.
Fix a positive integer $d$. 
Let $A_d= \left [a_d(i,j) \right ]_{0 \le i,j \le d}$ be 
the transpose of $H_\mathcal{F}$ in~\eqref{exbar}, with entries 
$$a_d(i,j) = \sum_{r=0}^{j} r^i (r+1)^{d-i} (-1)^{j-r} \binom{d+1}{j-r}.$$
To simplify the proof, we consider the bidiagonal decomposition of $A_d$.

%---------------------------------------------------------------------
\subsection{Neville elimination on the transpose of $H_\mathcal{F}$}
%---------------------------------------------------------------------
To facilitate the proof, let $T_d$ be a linear row operator: for any polynomial $f(x)$ of degree at most $d$, define the row vector 
\begin{equation*}
T_d(f) = (\left ( T_d(f) \right )_0, \left ( T_d(f) \right )_1, \dots, \left ( T_d(f) \right )_d),
\end{equation*}
where the $j$-th component is:
\begin{equation}
\left ( T_d(f) \right )_j = \sum_{r=0}^{j} f(r) (-1)^{j-r} \binom{d+1}{j-r}.
\end{equation}
Using this operator, the $i$-th row of $A_d$ is precisely $T_d(x^i(x+1)^{d-i})$. 

\begin{lem}\label{NEofAd}
For $1\le k \le d$, let 
$$
A_d^{(k)}=\left [ a_d^{(k)}(i,j) \right ]_{0 \le i,j \le d}
$$ 
be the matrix obtained after $k$ steps of Neville elimination of $A_d$.
Then for $k \leq i \leq d, \ k-1 \leq j\leq d$,
we have 
\begin{equation}\label{eNEforAd}
a_d^{(k)}(i,j) = \left(T_d(f^{(k)}_i(x))\right)_j,
\end{equation}
where 
$$
f^{(k)}_i(x)=\binom{x}{k}x^{i-k}(x+1)^{d-i}.
$$
\end{lem}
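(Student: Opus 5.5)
The plan is induction on $k$, exploiting that $T_d$ is linear. A Neville step that replaces row $i$ by row $i$ minus $c$ times row $i-1$ then corresponds to replacing the polynomial $f_i$ by $f_i-cf_{i-1}$. Throughout, two facts about $T_d$ will be used. First, $\left(T_d(f)\right)_j$ depends only on $f(0),\dots,f(j)$. Second, $f^{(k)}_i(r)=0$ for the integers $r=0,1,\dots,k-1$, because $\binom{x}{k}$ vanishes there.

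\emph{Base case $k=1$.} Column $0$ of $A_d$ has entries $a_d(i,0)=f(0)$ with $f=x^i(x+1)^{d-i}$, which is $0$ for $i\ge1$. So all multipliers $m_{i,0}$ are $0$ and $A_d^{(1)}=A_d$. For $i\ge1$ we have $f^{(1)}_i=x\cdot x^{i-1}(x+1)^{d-i}=x^i(x+1)^{d-i}$, which is exactly the polynomial generating row $i$ of $A_d$. Hence \eqref{eNEforAd} holds for $k=1$.

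\emph{Inductive step.} Assume \eqref{eNEforAd} for some $k$ with $1\le k\le d-1$. For $i\ge k$, the pivot-column entries are
$$a_d^{(k)}(i,k)=\left(T_d(f^{(k)}_i)\right)_k=\sum_{r=0}^{k} f^{(k)}_i(r)(-1)^{k-r}\binom{d+1}{k-r}.$$
All terms with $r<k$ vanish, so this equals $f^{(k)}_i(k)=k^{i-k}(k+1)^{d-i}$, which is positive for $k\ge1$. Hence, for $k+1\le i\le d$, the divisor $a_d^{(k)}(i-1,k)$ is nonzero. The multiplier is the constant
$$m_{i,k}=\frac{k^{i-k}(k+1)^{d-i}}{k^{i-1-k}(k+1)^{d-i+1}}=\frac{k}{k+1}.$$
The entries $a^{(k)}_d(i-1,j)$ with $j\ge k$ are covered by the hypothesis, since $i-1\ge k$. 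Linearity of $T_d$ then gives $a_d^{(k+1)}(i,j)=\left(T_d(g)\right)_j$ for $j\ge k$, where
$$g=f^{(k)}_i-\tfrac{k}{k+1}f^{(k)}_{i-1}=\binom{x}{k}x^{i-k-1}(x+1)^{d-i}\Bigl(x-\tfrac{k}{k+1}(x+1)\Bigr).$$
The bracket equals $\frac{x-k}{k+1}$, and $\binom{x}{k}\frac{x-k}{k+1}=\binom{x}{k+1}$. Therefore $g=f^{(k+1)}_i$, which is the claim for $k+1$ on the range $k+1\le i\le d$, $k\le j\le d$.

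The step I expect to need most care is the bookkeeping of index ranges rather than any computation. The Neville formula only alters entries with $i\ge k+1$ and $j\ge k$, which is exactly the range asserted for $k+1$. The nonvanishing of the divisor, which is needed to apply the first branch of the elimination formula, comes from evaluating at $r=k$. As a consistency check, the same vanishing argument gives $\left(T_d(f^{(k)}_i)\right)_{k-1}=0$ for $i\ge k$, matching the zeros that Neville elimination has created below the diagonal in column $k-1$.
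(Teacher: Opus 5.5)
Your proposal is correct and follows essentially the same route as the paper. Both argue by induction on $k$ using linearity of $T_d$, evaluate the pivot entries as $k^{i-k}(k+1)^{d-i}$ because $\binom{x}{k}$ vanishes at $0,\dots,k-1$, obtain the constant multiplier $\frac{k}{k+1}$, and close with $\binom{x}{k}\frac{x-k}{k+1}=\binom{x}{k+1}$; the paper additionally writes out $k=2$ separately (your general step already covers it), while you are slightly more explicit about the divisors being nonzero and about the index ranges.
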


\begin{proof}
We prove the statement by induction on $k$. 
For $k=1$, since 
\begin{equation*}
a_d(i,0)=\sum_{r=0}^{0}\binom{r}{0}r^{i}(r+1)^{d-i}(-1)^{0-r}\binom{d+1}{0-r}=
\begin{cases}
1 & \text{ if } i=0, \\
0 & \text{otherwise},
\end{cases}
\end{equation*}
we obtain that 
\begin{align*}
a_d^{(1)}(i,j)=a_d(i,j)&=\left(T_d \left( \binom{x}{0}x^{i}(x+1)^{d-i}\right) \right)_j\\
&=\left(T_d \left( \binom{x}{1}x^{i-1}(x+1)^{d-i}\right) \right)_j=\left(T_d(f^{(1)}_i(x))\right)_j
\end{align*}
for $1 \leq i \leq d, 0\leq j\leq d$. 
For $k=2$, consider $i\ge 1$.
Then
\begin{equation*}
a^{(1)}_d(i,1)=a_d(i,1)=\sum_{r=0}^{1}r^{i}(r+1)^{d-i}(-1)^{1-r}\binom{d+1}{1-r}=2^{d-i}.
\end{equation*}
Applying NE, for $2 \leq i \leq d, 1 \leq j\leq d$, we have 
\begin{align*}
a_d^{(2)}(i,j)=& a_d^{(1)}(i,j)-\frac{ a_d^{(1)}(i,1)}{ a_d^{(1)}(i-1,1)} a_d^{(1)}(i-1,j)\\
=&\left(T_d\left(f^{(1)}_i(x)-\frac{1}{2}f^{(1)}_{i-1}(x)\right)\right)_j\\
=&\left(T_d\left(\binom{x}{1}x^{i-1}(x+1)^{d-i}-\frac{1}{2}\binom{x}{1}x^{i-2}(x+1)^{d-i+1}\right)\right)_j\\
=&\left(T_d \left( \binom{x}{2}x^{i-2}(x+1)^{d-i}\right) \right)_j
=\left(T_d(f^{(2)}_i(x))\right)_j
\end{align*}
by the linearity of $T_d$. 

Assume that \eqref{eNEforAd} holds for $k$. 
Performing the $(k+1)$-th step of NE yields
\begin{equation*}
a_d^{(k+1)}(i,j)=a_d^{(k)}(i,j)-\frac{ a_d^{(k)}(i,k)}{ a_d^{(k)}(i-1,k)} a_d^{(k)}(i-1,j).
\end{equation*}
By the induction hypothesis, 
\begin{equation}\label{multi}
a_d^{(k)}(i,k)=\sum_{r=0}^{k}\binom{r}{k}r^{i-k}(r+1)^{d-i}(-1)^{k-r}\binom{d+1}{k-r}
=k^{i-k}(k+1)^{d-i}.
\end{equation}
Then we obtain that
\begin{equation*}
a_d^{(k+1)}(i,j)= a_d^{(k)}(i,j)-\frac{ a_d^{(k)}(i,k)}{ a_d^{(k)}(i-1,k)} a_d^{(k)}(i-1,j)
=\left(T_d\left[f^{(k)}_i(x)-\frac{k}{k+1}f^{(k)}_{i-1}(x)\right]\right)_j.
\end{equation*}
Note that for $k+1 \leq i \leq d, k \leq j\leq d$, by the induction hypothesis,
\begin{align*}
f^{(k)}_i(x)-\frac{k}{k+1}f^{(k)}_{i-1}(x)=&\binom{x}{k} x^{i-k} (x+1)^{d-i} - \frac{k}{k+1} \binom{x}{k} x^{i-k-1}(x+1)^{d-i+1}\\
=& x^{i-k-1}(x+1)^{d-i} \binom{x}{k} \left(x-\frac{k}{k+1}(x+1)\right)\\
=& x^{i-k-1} (x+1)^{d-i} \binom{x}{k} \frac{x-k}{k+1}
=\binom{x}{k+1} x^{i-k-1} (x+1)^{d-i}.
\end{align*}
Thus we derive that 
$$
a_d^{(k+1)}(i,j)=\left(T_d\left[\binom{x}{k+1} x^{i-k-1} (x+1)^{d-i}\right]\right)_j,
$$
and the proof is completed.
\end{proof}

Combining~\eqref{nemulti} and~\eqref{multi},
the multipliers in the NE are 
$$
m_{i,j}=\frac{a_d^{(j)}(i,j)}{a_d^{(j)}(i-1,j)}=\frac{j}{j+1}.
$$
Thus by Lemma \ref{NEofAd}, we obtain a matrix factorization 
$$
A_d=F_{d-1} F_{d-2} \cdots F_{0} U_d,
$$
where
\begin{equation*}
F_i =
\begin{tikzpicture}[baseline=(M.center)]
\matrix (M) [
matrix of math nodes,
nodes in empty cells,
left delimiter={[},
right delimiter={]},
column sep=0.7em,
row sep=0.35em,
nodes={
anchor=center,
inner sep=0pt,
minimum width=1.1em,
minimum height=1.05em,
font=\scriptsize
}
] {
1 &        &             &             &          & &      \\
0 & \ddots &             &             &          & &      \\
& \ddots & 1           &             &          & &      \\
&        & \frac{0}{1} & 1           &          & &      \\
&        &             & \frac{1}{2} & \ddots   & &      \\
&        &             &             & \ddots   & 1 &    \\
&        &             &             &          & \frac{d-i-1}{d-i} & 1\\
};

\node[font=\scriptsize] at ($(M-1-3.north)+(0,10pt)$) {$i$};
\node[font=\scriptsize, anchor=west] at ($(M-4-7.east)+(10pt,0)$) {$i+1$};
\end{tikzpicture},
\end{equation*}
and 
$$
U_d=\left[T_d(f^{(i)}_i(x))_j\right]_{0\le i\le j\le d}
$$
is an upper triangular matrix.
By~\eqref{eNEforAd},
the entry of $U_d=[u_d(i,j)]_{0\le i,j\le d}$ is
\begin{equation}\label{expud}
u_d(i,j)=T_d(f^{(i)}_i(x))_j= \sum_{r=0}^{j} \binom{r}{i} (r+1)^{d-i} (-1)^{j-r} \binom{d+1}{j-r}.
\end{equation}
See Example~\ref{deco:A4} for the case $d=4$.

%---------------------------------------------------------------------
\subsection{Bidiagonal decomposition of $U_d$}
%---------------------------------------------------------------------
It suffices to give a bidiagonal decomposition of $U_d$.
First we claim that $u_d(i,j)$ satisfies the following recurrence relation
\begin{equation}\label{recu_u}
u_d(i,j) = (j+1)\, u_{d-1}(i,j) + (d-j) \,u_{d-1}(i,j-1),
\end{equation}
with $0 \leq i \leq j \leq d-1$. 
Note that
\begin{align*}
(r+1)\binom{d+1}{j-r}&=
(r+1)\binom{d}{j-r}+(r+1)\binom{d}{j-r-1} \\
&=
(r+1)\binom{d}{j-r}
+
\bigl(d-(j-r)+1-d+j\bigr)\binom{d}{j-r-1}
\end{align*}
Using
$$
k\binom{d}{k}
=
(d-k+1)\binom{d}{k-1},
$$
we obtain that
$$
(r+1)\binom{d+1}{j-r}
=
(j+1)\binom{d}{j-r}
-
(d-j)\binom{d}{j-r-1}.
$$
Hence, by~\eqref{expud},
\begin{align*}
u_d(i,j)
&=
\sum_{r=0}^{j}
\binom{r}{i}
(r+1)^{d-1-i}
(-1)^{j-r}
(r+1)\binom{d+1}{j-r} \\
&=
\sum_{r=0}^{j}
\binom{r}{i}
(r+1)^{d-1-i}
(-1)^{j-r}
\left[
(j+1)\binom{d}{j-r}
-
(d-j)\binom{d}{j-r-1}
\right] \\
&=
(j+1)
\sum_{r=0}^{j}
\binom{r}{i}
(r+1)^{d-1-i}
(-1)^{j-r}
\binom{d}{j-r} \\
&\qquad
-
(d-j)
\sum_{r=0}^{j}
\binom{r}{i}
(r+1)^{d-1-i}
(-1)^{j-r}
\binom{d}{j-r-1}.
\end{align*}	
For the second sum, since \(\binom{d}{j-r-1}=0\) when \(r=j\), we may write
$$
\begin{aligned}
&-
(d-j)
\sum_{r=0}^{j}
\binom{r}{i}
(r+1)^{d-1-i}
(-1)^{j-r}
\binom{d}{j-r-1} \\
&\qquad =
(d-j)
\sum_{r=0}^{j-1}
\binom{r}{i}
(r+1)^{d-1-i}
(-1)^{j-1-r}
\binom{d}{j-1-r}.
\end{aligned}
$$
Therefore,
$$
\begin{aligned}
u_d(i,j)
&=
(j+1)
\sum_{r=0}^{j}
\binom{r}{i}
(r+1)^{d-1-i}
(-1)^{j-r}
\binom{d}{j-r} \\
&\qquad
+
(d-j)
\sum_{r=0}^{j-1}
\binom{r}{i}
(r+1)^{d-1-i}
(-1)^{j-1-r}
\binom{d}{j-1-r} \\
&=
(j+1)u_{d-1}(i,j)
+
(d-j)u_{d-1}(i,j-1).
\end{aligned}
$$
This proves the desired recurrence.

Let $B_d$ be the $d \times d$ upper bidiagonal matrix indexed by $0, \dots, d-1$, defined by
\begin{equation*}
B_d(i,j)= 
\begin{cases} 
j+1, \,& j = i, \\ 
d-j, \,& j = i+1, \\ 
0, \,& \text{otherwise}. 
\end{cases}
\end{equation*}
For $0 \leq i \leq j \leq d-1$, by the recurrence relation \eqref{recu_u}, we have
\begin{align*}
u_{d}(i,j)&=(j+1) \cdot u_{d-1}(i,j) + (d-j) \cdot u_{d-1}(i,j-1)\\
&=u_{d-1}(i,j) \cdot B_d(j,j) + u_{d-1}(i,j-1)\cdot B_d(j-1,j).
\end{align*}
By~\eqref{expud}, it is obvious that 
$$
u_d(d,d)=\sum_{r=0}^{d} \binom{r}{d} (r+1)^{0} (-1)^{d-r} \binom{d+1}{d-r}=1.
$$
This gives the following decomposition
\begin{equation}\label{recu_Ud}
U_d = \begin{bmatrix} U_{d-1} B_d & 0 \\ 0 & 1 \end{bmatrix}.
\end{equation}
Iterating the decomposition, we obtain
\begin{equation}\label{deco:Ud}
\begin{aligned}
&U_d= \begin{bmatrix} U_{d-1} B_d & 0 \\ 0 & 1 \end{bmatrix}
=\begin{bmatrix} \begin{bmatrix} U_{d-2} B_{d-1} & 0 \\ 0 & 1 \end{bmatrix} B_d & 0 \\ 0 & 1 \end{bmatrix}
=\begin{bmatrix} U_{d-2} B_{d-1} & 0 & 0 \\ 
0 & 1 & 0\\
0 & 0 & 1 \end{bmatrix}
\begin{bmatrix} B_d & 0 \\ 0 & 1 \end{bmatrix}\\
&=\begin{bmatrix}\begin{bmatrix} U_{d-3} B_{d-2} & 0 \\ 0 & 1 \end{bmatrix} B_{d-1} & 0 & 0 \\ 
0 & 1 & 0\\
0 & 0 & 1 \end{bmatrix}
\begin{bmatrix} B_d & 0 \\ 0 & 1 \end{bmatrix}
=\begin{bmatrix} U_{d-3} B_{d-2} & 0 & 0 & 0\\ 
0 & 1 & 0&0\\
0 & 0 & 1&0\\
0&0&0&1 \end{bmatrix} 
\begin{bmatrix} B_{d-1} & 0 & 0 \\ 
	0 & 1 & 0\\
	0 & 0 & 1 \end{bmatrix}
	\begin{bmatrix} B_d & 0 \\ 0 & 1 \end{bmatrix}\\
&=\dots=\begin{bmatrix} B_{1} & &  &\\ 
& 1 & &\\
&  &\ddots &\\
&&&1 \end{bmatrix} \cdots
\begin{bmatrix} B_{d-1} & 0 & 0 \\ 
0 & 1 & 0\\
0 & 0 & 1 \end{bmatrix}\begin{bmatrix} B_d & 0 \\ 0 & 1 \end{bmatrix}= \prod_{k=1}^{d} \widetilde{B}_k,
\end{aligned}
\end{equation}
where $\widetilde{B}_k$ is the direct sum of $B_k$ and the identity matrix of order $d+1-k$. 
Therefore, we give the bidiagonal decomposition of $U_d$.

\begin{exm}\label{deco:ud}
Taking $d=4$ as an example. 
We have shown in Example~\ref{deco:A4} that 
$$
U_4 =\mat{
1&11&11&1&0\\
0&8&14&2&0\\
0&0&9&3&0\\
0&0&0&4&0\\
0&0&0&0&1
}.
$$
The bidiagonal decomposition of $U_4$ is 
\begin{align*}
U_4=&\widetilde{B}_1 \widetilde{B}_2 \widetilde{B}_3 \widetilde{B}_4 \\
=& 
\left[
\begin{array}{c:cccc}
1& & & & \\
\hdashline
&1& & & \\
& &1& & \\
& & &1& \\
& & & &1
\end{array}
\right]
\left[
\begin{array}{cc:ccc}
1&1& & & \\
&2& & & \\
\hdashline
& &1& & \\
& & &1& \\
& & & &1
\end{array}
\right]
\left[
\begin{array}{ccc:cc}
1&2& & & \\
&2&1& & \\
& &3& & \\
\hdashline
& & &1& \\
& & & &1
\end{array}
\right]
\left[
\begin{array}{cccc:c}
1&3& & & \\
&2&2& & \\
& &3&1& \\
& & &4& \\
\hdashline
& & & &1
\end{array}
\right].
\end{align*}
\end{exm}

%---------------------------------------------------------------------
\subsection{Proof of Theorem~\ref{key}}
%---------------------------------------------------------------------
We obtain the bidiagonal decomposition of $A_d$ as
$$
A_d=F_{d-1} F_{d-2} \cdots F_{0} \widetilde{B}_1 \widetilde{B}_2\cdots \widetilde{B}_d.
$$
Recall that $H_\mathcal{F}$ is the transpose of $A_d$. 
Then we derive the bidiagonal decomposition of $H_\mathcal{F}$,
$$
H_\mathcal{F}=\widetilde{B}_d^T \cdots \widetilde{B}_2^T \widetilde{B}_1^T F_0^T \cdots F_{d-2}^T F_{d-1}^T.
$$
Taking $W_i=\widetilde{B}_{d-i}^T$ for $i=0,1,\ldots, d-1$,
and $W_i= F_{i-d}$ for $i=d,d+1,\ldots 2d-1$,
we complete the proof. 

Following Example~\ref{deco:A4} and Example~\ref{deco:ud},
we present the bidiagonal factorization of $H_\mathcal{F}$ for $d=4$,
\begin{align*}
H_\mathcal{F} = &\mat{
1 &   &   &   &   \\
3 & 2 &   &   &   \\
& 2 & 3 &   &   \\
&   & 1 & 4 &   \\
&   &   &   & 1
}
\mat{
1 &   &   &   &   \\
2 & 2 &   &   &   \\
& 1 & 3 &   &   \\
&   &   & 1 &   \\
&   &   &   & 1
}
\mat{
1 &   &   &   &   \\
1 & 2 &   &   &   \\
&   & 1 &   &   \\
&   &   & 1 &   \\
&   &   &   & 1
}
\mat{
1 &   &   &   &   \\
& 1 &   &   &   \\
&   & 1 &   &   \\
&   &   & 1 &   \\
&   &   &   & 1
}\cdot\\
& \mat{
1 & 0   &        &        &        \\
& 1 & \frac{1}{2} &        &        \\
&   & 1      & \frac{2}{3} &        \\
&   &        & 1      & \frac{3}{4} \\
&   &        &        & 1
}
\mat{
1 &   &   &        &        \\
& 1 & 0  &        &        \\
&   & 1 & \frac{1}{2} &        \\
&   &   & 1      & \frac{2}{3} \\
&   &   &        & 1
}
\mat{
1 &   &   &   &        \\
& 1 &   &   &        \\
&   & 1 & 0  &        \\
&   &   & 1 & \frac{1}{2} \\
&   &   &   & 1
}
\mat{
1 &   &   &   &   \\
& 1 &   &   &   \\
&   & 1 &   &   \\
&   &   & 1 &  0  \\
&   &   &   & 1
}.
\end{align*}

%---------------------------------------------------------------------
\section{Total positivity of the transformation matrix of the interval subdivision}
\label{sec4}
%---------------------------------------------------------------------
In this section, we study the total positivity of the transformation matrix of the interval subdivision. 
The interval subdivision of a simplicial complex was introduced by Walker~\cite{Wal88} in his work on canonical homeomorphisms of posets.
It is closely related to the barycentric subdivision~\cite{AN20}.
Let 
\begin{equation*}
H_{\mathcal{F}} = \left [p_d(i,j)\right ]_{0 \le i,j \le d}\quad \textrm{and}\quad H_{\mathcal{F}^*} = \left [q_d(i,j)\right ]_{0 \le i,j \le d}
\end{equation*}
be the transformation matrices of the barycentric subdivision and interval subdivision of $(d-1)$-dimensional simplicial complexes, respectively. 
Let 
$$
P_{d,j}(x) \coloneqq \sum_{i=0}^{d}p_d(i,j)x^i\quad \textrm{and} \quad  Q_{d,j}(x) \coloneqq \sum_{i=0}^{d}q_d(i,j)x^i
$$ 
be the $j$-th column generating functions of $H_{\mathcal{F}}$ and $H_{\mathcal{F}^*}$, respectively. 
The following lemma gives the relationship between $P_{d,j}(x)$ and $Q_{d,j}(x)$.

\begin{lem}[{\cite[Corollary 4.7]{AN20}}]\label{deco_polyB}
For $d \geq 0$ and $0 \leq j \leq d$, we have
\begin{equation}\label{A-B}
Q_{d, j}(x) = E_2\left[(1+x)^d P_{d, j}(x)\right],
\end{equation}
where $E_r$ is the operator on formal series defined by 
$$
E_2\left[\sum_{k \ge 0} c_k x^k\right] = \sum_{k \ge 0} c_{2k} x^k.
$$ 
\end{lem}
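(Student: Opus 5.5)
Because the statement is linear in $h^\Delta$, the plan is to deduce it from a statement about $h$-polynomials of complexes:
\begin{equation*}
h\bigl(\Delta_{\mathcal{F}^*},x\bigr)=E_2\bigl[(1+x)^d\,h(\mathrm{sd}(\Delta),x)\bigr]
\end{equation*}
for every $(d-1)$-dimensional complex $\Delta$. Here $\mathrm{sd}$ is the barycentric subdivision and $\Delta_{\mathcal{F}^*}$ the interval subdivision. By Lemma~\ref{UniTrianMatrix}, $h(\mathrm{sd}(\Delta),x)=\sum_j h_j^\Delta P_{d,j}(x)$ and $h(\Delta_{\mathcal{F}^*},x)=\sum_j h^\Delta_j Q_{d,j}(x)$, and $E_2$ is linear. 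The displayed identity therefore gives $\sum_j h_j^\Delta\bigl(Q_{d,j}-E_2[(1+x)^dP_{d,j}]\bigr)=0$. To pass to columns I will use the explicit description of $H_{\mathcal{F}}$ in \cite{Ath22}: the columns are determined by the face numbers of the restriction of $\mathcal{F}$ to simplices. Hence both sides can be treated as formal linear functionals of the $f$-vector (equivalently the $h$-vector) of $\Delta$, and the identity may be checked on the free variables $h_0,\dots,h_d$ rather than only on realizable $h$-vectors.

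The key step is a combinatorial identification. I will show that the interval subdivision of $\Delta$ has the same face numbers as $\mathrm{esd}_2(\mathrm{sd}(\Delta))$, the second edgewise subdivision of the barycentric subdivision.

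\begin{itemize}
\item Faces of $\mathrm{sd}(\Delta)$ are chains $F_1\subsetneq\cdots\subsetneq F_k$ of nonempty faces.
\item The second edgewise subdivision of a simplex with ordered vertices $v_1<\cdots<v_k$ has vertices indexed by pairs $i\le j$, that is, by intervals $[F_i,F_j]$.
\item Its faces correspond to chains of such pairs that are componentwise weakly increasing in the Freudenthal/Kuhn sense.
\item Composing these gives a bijection from faces of $\mathrm{esd}_2(\mathrm{sd}(\Delta))$ to chains of closed intervals $[F,G]$ of the face poset, ordered by the interval-subdivision order. These chains are exactly the faces of Walker's interval subdivision.
\end{itemize}

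It suffices to do this restricted to a single simplex, since both triangulations are uniform. I will verify that the face counts of the restrictions agree dimension by dimension, for instance by matching chains directly.

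With that identification, the Brenti--Welker formula for edgewise subdivisions \cite{BW09} applies:
\begin{equation*}
h(\mathrm{esd}_r(\Gamma),x)=E_r\bigl[(1+x+\cdots+x^{r-1})^d\,h(\Gamma,x)\bigr],
\end{equation*}
valid for any $(d-1)$-dimensional $\Gamma$ and depending only on face numbers. Taking $r=2$ and $\Gamma=\mathrm{sd}(\Delta)$ gives exactly the displayed identity, since $h$-vectors depend only on $f$-vectors. Combined with the linear-algebra step of the first paragraph, this yields \eqref{A-B}.

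I expect the main obstacle to be the face-count identification of the interval subdivision with $\mathrm{esd}_2\circ\mathrm{sd}$. The difficulty is setting up the bijection between Kuhn-type chains of pairs $i\le j$ and chains of intervals compatibly across different chains of faces, so that counts glue correctly. If a clean bijection is elusive, my fallback is to compute both $f$-polynomials of the restriction to a simplex directly. For the interval subdivision this means counting chains of intervals in the Boolean lattice, which gives sums over ordered set compositions. For $\mathrm{esd}_2(\mathrm{sd})$ I would combine the known simplex counts of $\mathrm{esd}_2$ with the chain counts of $\mathrm{sd}$, and then compare the two generating functions.
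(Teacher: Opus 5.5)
The paper gives no proof of this lemma; it simply imports it from \cite{AN20}, so your proposal has to stand on its own. Your outer steps are fine. The complex-level identity implies the column identities because $h$-vectors of $(d-1)$-dimensional complexes span $\mathbb{R}^{d+1}$: take a $(d-1)$-simplex together with its disjoint unions with a $(k-1)$-simplex for $1\le k\le d$. You also quote the Brenti--Welker formula correctly.

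The gap is the middle step, and the bijection you sketch for it is false. Use the inclusion order on a chain $1<2<\cdots<k$. A face of $\mathrm{esd}_2$ is then a set of pairs $(i_1,j_1),\dots,(i_m,j_m)$ with $i_1\le\cdots\le i_m\le j_1\le\cdots\le j_m$, so both coordinates increase. A chain of intervals under containment instead has $i_1\le\cdots\le i_m\le j_m\le\cdots\le j_1$, so the right endpoints decrease. Identifying $(F,G)$ with $[F,G]$ therefore does not carry faces to faces. Take $F_1\subset F_2\subset F_3$:
the set $\{(F_1,F_2),(F_2,F_2),(F_2,F_3)\}$ is a facet of $\mathrm{esd}_2(\mathrm{sd}(\Delta))$, but $[F_1,F_2]$ and $[F_2,F_3]$ are incomparable intervals;
conversely, $\{[F_1,F_3],[F_1,F_2],[F_2,F_2]\}$ is a chain of intervals, but $(F_1,F_3)$ and $(F_2,F_2)$ are not adjacent in $\mathrm{esd}_2$.

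The two complexes are in fact not isomorphic at all. For a triangle $\{a,b,c\}$, the vertex $[a,abc]$ of the interval subdivision has degree $8$, while every vertex of $\mathrm{esd}_2$ of the barycentric subdivision of the triangle has degree at most $6$. So no vertex-induced bijection exists. The obstacle you flag is real, and as written the key claim $f(\Delta_{\mathcal{F}^*})=f(\mathrm{esd}_2(\mathrm{sd}(\Delta)))$ is unproved.

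The claim is nevertheless true, and your fallback can be made precise by counting faces according to their carrier in $\mathrm{sd}(\Delta)$. Consider a face with $m$ vertices, in either complex, whose carrier is a chain of length $k$. It is encoded by a weakly increasing word $w_1\le\cdots\le w_{2m}$ that uses every letter of $\{1,\dots,k\}$. For intervals the word is read as the pairs $(w_l,w_{2m+1-l})$; for $\mathrm{esd}_2$ it is read as $(w_l,w_{m+l})$. The only further condition is that consecutive pairs be distinct.

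Now pass to the step vector $g\in\{0,1\}^{2m-1}$, which has exactly $k-1$ ones. The distinctness condition says that for each $1\le l\le m-1$:
for intervals, positions $l$ and $2m-l$ are not both $0$;
for $\mathrm{esd}_2$, positions $l$ and $m+l$ are not both $0$.
The permutation of positions sending $2m-l$ to $m+l$ matches the two sets of constraints, so the counts agree for every $k$ and $m$. Explicitly, the bijection is $\{[i_l,j_l]\}\mapsto\{(i_l,\,j_1+j_m-j_l)\}$. This map depends on the face, which is consistent with the non-isomorphism above.

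With this counting lemma in place, your argument becomes a complete, self-contained derivation of the cited identity from the Brenti--Welker formula.
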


Now we present our result.
\begin{thm}\label{TP_int}
Let $\mathcal{F}^*$ be the interval subdivision of $(d-1)$-dimensional simplicial complexes. 
Then the transformation matrix $H_{\mathcal{F}^*}$ is totally positive.
\end{thm}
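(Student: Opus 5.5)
Following the outline announced in the introduction, we will express $H_{\mathcal{F}^*}$ as a product of three matrices, each of which is TP, and then apply the Cauchy--Binet formula. Recall that a product of TP matrices is TP, since every minor of the product is a sum of products of minors of the factors with nonnegative coefficients.

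We first translate Lemma~\ref{deco_polyB} into matrix language. Let $C=[\binom{d}{i-k}]_{0\le i\le 2d,\,0\le k\le d}$ be the $(2d+1)\times(d+1)$ matrix of multiplication by $(1+x)^d$ on coefficient vectors. Let $E=[\delta_{2i,l}]_{0\le i\le d,\,0\le l\le 2d}$ be the $(d+1)\times(2d+1)$ selection matrix that extracts the even-indexed rows. Identity \eqref{A-B}, read column by column, then becomes
\begin{equation*}
H_{\mathcal{F}^*}=E\,C\,H_{\mathcal{F}}.
\end{equation*}

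Next we show that each factor is TP. For $H_{\mathcal{F}}$ this is Corollary~\ref{TP-sub}. For $C$, we use the factorization $(1+x)^d=(1+x)\cdots(1+x)$: the matrix $C$ is a product of $d$ rectangular bidiagonal Toeplitz matrices with nonnegative entries (multiplication by $1+x$). Each of these is TP, being realized by a planar network in the style of Brenti's construction and Theorem~\ref{Bre}. Alternatively, $C$ is a submatrix of the infinite Toeplitz matrix of the Pólya frequency sequence $\binom{d}{n}$, whose generating polynomial has only real zeros, and so it is TP by Aissen--Schoenberg--Whitney. The product $EC=[\binom{d}{2i-k}]$ is then a row-submatrix of a TP matrix, hence TP. Note that $E$ alone is a $0/1$ matrix whose ones lie in strictly increasing positions, so every minor of $E$ is $0$ or $1$; we do not need this, but it is another way to conclude.

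The main obstacle is the bookkeeping that identifies $E\,C\,H_{\mathcal{F}}$ with $H_{\mathcal{F}^*}$ exactly. There are two points to check. First, the rows of index $>2d$ are not needed, because $(1+x)^dP_{d,j}(x)$ has degree at most $2d$. Second, the indexing conventions must agree: $Q_{d,j}$ is the $j$-th column generating function, and both matrices are indexed from $0$. Once this identification is in place, the Cauchy--Binet formula gives, for any row set $I$ and column set $J$,
\begin{equation*}
\det H_{\mathcal{F}^*}[I,J]=\sum_{K}\det(EC)[I,K]\,\det H_{\mathcal{F}}[K,J]\ge 0,
\end{equation*}
which proves Theorem~\ref{TP_int}.
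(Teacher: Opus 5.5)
Your proposal is correct and is essentially the paper's proof. The paper writes $H_{\mathcal{F}^*}=S_dV_dH_{\mathcal{F}}$, where $S_d$ is your even-row selector $E$ and $V_d=[\binom{d}{i-j}]$ is your $C$; it gets total positivity of $V_d$ from Aissen--Schoenberg--Whitney and of $H_{\mathcal{F}}$ from Corollary~\ref{TP-sub}, then concludes by Cauchy--Binet. Your alternative bidiagonal factorization of $C$ and your treatment of $EC$ as a row-submatrix are only cosmetic variants of that argument.
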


\begin{proof}
Taking the coefficients of $x^i$ on both sides of~\eqref{A-B},
we obtain that
\begin{equation}\label{A-B-C}
q_d(i, j)=\sum_{r=0}^{d} \binom{d}{2i-r} p_d(r,j).
\end{equation}
Let $S_d=\left[s_d(i,j)\right]_{\substack{0\le i\le d\\ 0\le j \le 2d}}$ be a row-selection matrix with
\begin{equation*}
s_d(i,j)=
\begin{cases}
1, & j=2i,\\
0, & j \neq 2i,
\end{cases}
\end{equation*}
and 
$$
V_d=\left[\binom{d}{i-j}\right]_{\substack{0\le i\le 2d\\ 0\le j \le d}}.
$$
By~\eqref{A-B-C}, we obtain the matrix factorization 
\begin{equation*}
H_{\mathcal{F}^*}=S_d V_d H_{\mathcal{F}}.
\end{equation*}

The TP of $S_d$ is obvious since all its minors are either $0$ or $1$.
Note that $V_d$ is a truncated submatrix of the Toeplitz matrix of the sequence $\left \{ \binom{d}{k} \right \}_{k=0}^{d}$. 
Since the generating function has only real non-positive zeros $\left(\sum_k \binom{d}{k}x^k=(1+x)^d\right)$, 
this sequence is a finite P\'olya frequency sequence.
By the Aissen–Schoenberg–Whitney theorem \cite{ASW52}, the Toeplitz matrix of 
this sequence is totally positive and so is the truncated submatrix $V_d$.
By Corollary~\ref{TP-sub}, the transformation matrix $H_{\mathcal{F}}$ is TP.
Then, by the Cauchy--Binet formula, 
the transformation matrix $H_{\mathcal{F}^*}$ of the interval subdivision is totally positive.
\end{proof}

The following example presents the decomposition of $H_{\mathcal{F^*}}$ for $d=4$.
\begin{exm}
Based on Example \ref{deco:A4} and the proof of Theorem~\ref{TP_int}, we can obtain that
\begin{equation*}
S_4=
\mat{
1&0&0&0&0&0&0&0&0\\
0&0&1&0&0&0&0&0&0\\
0&0&0&0&1&0&0&0&0\\
0&0&0&0&0&0&1&0&0\\
0&0&0&0&0&0&0&0&1
}, \,
V_4=
\mat{
1&0&0&0&0\\
4&1&0&0&0\\
6&4&1&0&0\\
4&6&4&1&0\\
1&4&6&4&1\\
0&1&4&6&4\\
0&0&1&4&6\\
0&0&0&1&4\\
0&0&0&0&1
}, \,
H_{\mathcal{F}}=
\mat{
1  & 0  & 0  & 0  & 0 \\
11 & 8  & 4  & 2  & 1 \\
11 & 14 & 16 & 14 & 11 \\
1  & 2  & 4  & 8  & 11 \\
0  & 0  & 0  & 0  & 1
}.
\end{equation*}
Then for $d=4$, we have
\begin{equation*}
H_{\mathcal{F}^*}=S_4 V_4 H_{\mathcal{F}}=
\mat{
1&0&0&0&0\\
61&46&32&22&15\\
115&124&128&124&115\\
15&22&32&46&61\\
0&0&0&0&1
}.
\end{equation*}
\end{exm}
%---------------------------------------------------------------------
\section{A criterion for TP$_2$ of the transformation matrix}
\label{sec5}
%---------------------------------------------------------------------
In this section, we show that if a uniform subdivision has the interlacing property, then the transformation matrix $H_\mathcal{F}$ is TP$_2$. 
This answers part of the question of Mu and Welker~\cite[Question 2.9]{MW26}. 
Before presenting the main result, we need some notions. 

Let $f,g\in \mathbb{R}[x]$ be two real-rooted polynomials with positive leading coefficients. 
Following Br{\"a}nd{\'e}n~\cite{Bra15}, we say that $f$ is an \emph{interleaver} of $g$ (written $f \preceq g$) if
$$
\cdots \le r_2(f) \le r_2(g)\le r_1(f)\le r_1(g),
$$
where $r_i(f)$ and $r_i(g)$ are the zeros of $f$ and $g$ in non-increasing order, respectively. 
For notational convenience, let $0 \preceq 0$, $0 \preceq h$ and $h \preceq 0$, where $h$ is any real-rooted polynomial with positive leading coefficient. 
By definition, if $f\preceq g$, then $\deg(g)=\deg(f)$ or $\deg(f)+1$. 
A sequence $(f_0(x), f_1(x), \dots , f_n(x))$ of real-rooted polynomials is called \emph{interlacing} if $f_i(x) \preceq f_j(x)$ for all $0 \le i < j \le n$. 
Note that if $(f_0(x), f_1(x), \dots , f_n(x))$ is interlacing,
then either $\deg(f_i(x))=\deg(f_j(x))$ for all $i,j$, or there exists an integer $m$ such that 
$$
\deg(f_1(x))+1=\cdots=\deg(f_m(x))+1=\deg(f_{m+1}(x))=\cdots=\deg(f_{n}(x)).
$$

Let $\mathcal{F}$ be a uniform subdivision of $(d-1)$-dimensional simplicial complexes, and 
$$
H_\mathcal{F}=\left [ p_{d}(i,j) \right ]_{0 \le i,j \le d}
$$ 
be its transformation matrix. 
Let 
$$
P_{d,k}(x)=\sum_{i=0}^{d}p_{d}(i,k)x^i
$$ 
be the $k$-th column generating function of the matrix $H_{\mathcal{F}}$. 
We say that $\mathcal{F}$ has the \emph{interlacing property} if the polynomial sequence $\left(P_{d,0}(x),P_{d,1}(x), \dots, P_{d,d}(x)\right)$ is an interlacing sequence. 
The following theorem is the main result of this section.

\begin{thm}\label{TP2ques}
If the uniform subdivision $\mathcal{F}$ has the interlacing property, then 
the transformation matrix $H_\mathcal{F}$ is TP$_2$.
\end{thm}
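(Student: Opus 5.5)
The plan is to reduce the TP$_2$ property of $H_\mathcal{F}$ to a statement about one pair of interlacing polynomials with nonnegative coefficients. Order-$1$ minors are the entries $p_d(i,j)$, and I take their nonnegativity as part of the setting: they count faces of the subdivision of a simplex, and the sequences are assumed to have positive leading coefficients. For order-$2$ minors, fix rows $i<l$ and columns $k<m$, and put $f=P_{d,k}$, $g=P_{d,m}$. By the interlacing property $f\preceq g$, and the minor is
$$
D_{il}(f,g)=f_i g_l - f_l g_i ,
$$
where $f_i$ denotes the coefficient of $x^i$. If $f=0$ or $g=0$ this vanishes. Otherwise $f$ and $g$ are real-rooted with nonnegative coefficients, hence all their roots are $\le 0$ and their coefficient sequences are P\'olya frequency sequences. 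In particular each sequence is log-concave with no internal zeros, so $f_a f_b\ge f_{a-1}f_{b+1}$ whenever $a\le b$.

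The key observation is that $D_{il}(f,g)$ is linear in $g$. So I will write $g$ as a nonnegative combination of simple polynomials built from $f$ and check the sign of $D_{il}$ on each piece. Write $f=c\prod_{s}(x+a_s)$ with $a_s\ge 0$.

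\emph{Case $\deg g=\deg f$.} Partial fractions give $g=\lambda f+\sum_s \mu_s f/(x+a_s)$ with $\lambda>0$. The interlacing $r_j(f)\le r_j(g)$ should force all $\mu_s\ge 0$, by the sign of $g$ evaluated at the roots of $f$.

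\emph{Case $\deg g=\deg f+1$.} Similarly $g=(\alpha x+\beta)f+\sum_s\mu_s f/(x+a_s)$ with $\alpha>0$ and $\mu_s\ge0$. Here I must check that $\beta$ has the right sign or can be absorbed.

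Multiple roots and common roots of $f$ and $g$ are handled by cancelling the common factor. Multiplying both polynomials by the same PF factor preserves the sign of every $D_{il}$, because $D_{il}$ are $2\times 2$ minors of a product of a TP$_2$ Toeplitz matrix and the $2$-column matrix; this is Cauchy--Binet.

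Next I evaluate $D_{il}$ on each piece. First, $D_{il}(f,f)=0$. Second, for $g=xf$ one gets $D_{il}(f,xf)=f_if_{l-1}-f_lf_{i-1}\ge0$ by log-concavity of $f$. Third, for $g=h:=f/(x+a)$, so that $f_t=h_{t-1}+ah_t$, one gets
$$
D_{il}(f,h)=h_{i-1}h_l-h_{l-1}h_i ,
$$
since the $a$-terms cancel. This has a definite sign by log-concavity of $h$, which is again PF.

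The main obstacle is the sign bookkeeping. I expect the $\mu_s$-terms, which satisfy $D_{il}(f,h)\le 0$, and the $x$-term, which satisfies $D_{il}(f,xf)\ge0$, to have opposite signs. That would mean the naive decomposition $g=\lambda f+\sum\mu_s f/(x+a_s)$ is written in the wrong direction. The fix I would try first is to use the orientation $f\preceq g$ correctly and decompose $f$ in terms of $g$ instead, $f=\lambda g+\sum_s\nu_s g/(x+b_s)$ with $\nu_s\ge0$. Then $D_{il}(f,g)=\sum_s\nu_s D_{il}(g/(x+b_s),g)=\sum_s\nu_s\bigl(h_{l-1}h_i-h_{i-1}h_l\bigr)\ge0$, where now $h=g/(x+b_s)$ and the inequality is log-concavity of $h$. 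This covers $\deg f=\deg g-1$ directly. For $\deg f=\deg g$ I would add a constant multiple of $g$, and possibly split off a factor to reduce to the previous case.

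I would sanity-check the orientation on $f=x+2\preceq g=x+1$, where $D_{01}=2\cdot1-1\cdot1=1\ge0$. Once the signs are confirmed, every order-$2$ minor of $H_\mathcal{F}$ is nonnegative, and $H_\mathcal{F}$ is TP$_2$.
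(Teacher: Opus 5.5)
Your final argument, which expands the earlier column $f$ in terms of the later column $g$, is correct. It reaches the result by a more direct route than the paper. Both proofs rest on the same expansion $f=\lambda g+\sum_s\nu_s\,g/(x+b_s)$ with $\nu_s\ge 0$ (Lemma~\ref{Fiskpolylem}). The paper surrounds this expansion with several cited or limiting steps:
\begin{itemize}
\item it reduces to consecutive rows and columns via Lemma~\ref{Fekete};
\item it handles the equal-degree case by quoting Fisk's Lemma~\ref{fisklem};
\item it gets TP$_2$ of the coefficient matrix of $(g/(x+s_1),\dots,g/(x+s_n),g)$ only as a $t\to\infty$ limit of Fisk's lemma applied to $(x+t)g$, followed by Cauchy--Binet;
\item it handles zero constant terms by an $\varepsilon$-perturbation.
\end{itemize}
You replace all of this with three elementary facts. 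First, the identity $D_{il}(h,(x+b)h)=h_ih_{l-1}-h_{i-1}h_l\ge 0$, which is the PF$_2$ property of $h$. Second, $D_{il}(g,g)=0$, which disposes of the $\lambda g$ term. Third, division by $\gcd(f,g)$ together with Cauchy--Binet against a PF Toeplitz matrix. You also treat every pair of columns at once, which is legitimate because an interlacing sequence is pairwise interlacing by definition.

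That last point is a real gain. The consecutive-minor criterion in Lemma~\ref{Fekete} is false for matrices with merely nonnegative entries: take rows $(1,0,2)$ and $(2,0,1)$, whose consecutive minors are $0$ but whose outer minor is $-3$. So your route avoids a step that needs extra care in the paper. The paper's route mainly buys a presentation of the result as an extension of Fisk's published lemma. You and the paper both take nonnegativity of the entries of $H_\mathcal{F}$ as given; it is the unverified hypothesis of Proposition~\ref{CondTP2}.

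When you write this up, drop the first attempt. Its premise $\mu_s\ge 0$ is false: the residues are $\mu_s=g(-a_s)/f'(-a_s)\le 0$, as $x+1=(x+2)-1$ already shows. With that correct sign, the first decomposition would in fact also have worked.

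In the version you keep, two points need a sentence each.
\begin{itemize}
\item For $\deg f=\deg g$, Lemma~\ref{Fiskpolylem} as stated does not apply. Instead, $\nu_s=f(-b_s)/g'(-b_s)\ge 0$ because exactly $s-1$ roots of $f$ exceed the $s$-th largest root of $g$, so $f$ and $g'$ have the same sign $(-1)^{s-1}$ there.
\item The reduction to simple, non-shared roots needs two observations. The interlacing chain forces any multiple root of $g$ to be a root of $f$, so after dividing by $\gcd(f,g)$ the quotient of $g$ has simple roots. And removing a common root from both polynomials deletes two adjacent entries of the merged chain, so $\preceq$ is preserved.
\end{itemize}
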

We prove this theorem by relating the interlacing property to the TP$_2$ of coefficient matrices. 
Given a polynomial sequence $(f_0(x), f_1(x), \dots,f_n(x))$, 
where 
$$
f_i(x)=\sum_{j=0}^{n}a({i,j})x^j,
$$
its \emph{coefficient matrix} is defined as $A=\left[a({i,j})\right]_{0\le i,j\le n}$. 
In other words, the polynomial $f_i(x)$ is the row generating function of $A$. 
Fisk gave the following result on the TP$_2$ of coefficient matrices.

\begin{lem}[{\cite[Lemma 3.20]{Fisk}}]
\label{fisklem}
Let $(f_0(x), f_1(x), \dots,f_n(x))$ be an interlacing polynomial sequence. 
Suppose that $\deg(f_i(x))=n$ and $f_i(x)$ has positive coefficients for each $i$. 
Then its coefficient matrix is TP$_2$.
\end{lem}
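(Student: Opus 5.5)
We have to show that every entry and every $2\times 2$ minor of the coefficient matrix is nonnegative. The entries are positive by hypothesis. For rows $i<j$ and columns $k<l$, the minor is $a(i,k)a(j,l)-a(i,l)a(j,k)$. Because all coefficients are positive, this minor is nonnegative exactly when the ratio $a(j,k)/a(i,k)$ is nondecreasing in $k$. So it suffices to prove the following two-polynomial statement: if $f\preceq g$, both have degree $n$ and positive coefficients, then $g_k/f_k$ is nondecreasing in $k$. This applies to every pair $i<j$, since the sequence is interlacing, so $f_i\preceq f_j$.

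\textbf{Step 1: factor both polynomials.} A real-rooted polynomial with positive coefficients has all zeros strictly negative. Write
\[
f=c\prod_{k=1}^n (x+\alpha_k), \qquad g=c'\prod_{k=1}^n (x+\beta_k),
\]
with $c,c'>0$, $\alpha_1\ge \alpha_2\ge\cdots>0$ and $\beta_1\ge\beta_2\ge\cdots>0$, so that $r_k(f)=-\alpha_k$ and $r_k(g)=-\beta_k$. The relation $f\preceq g$ then reads $\alpha_1\ge\beta_1\ge\alpha_2\ge\beta_2\ge\cdots$, and in particular $\beta_m\le\alpha_m$ for every $m$. The constants $c,c'$ do not affect monotonicity of the ratio, so I drop them.

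\textbf{Step 2: interpolate one factor at a time.} Define
\[
h_m=\prod_{k\le m}(x+\beta_k)\prod_{k>m}(x+\alpha_k), \qquad 0\le m\le n,
\]
so that $h_0=f/c$ and $h_n=g/c'$. Every $h_m$ has only negative zeros, so all its coefficients are positive. Consecutive polynomials have the form $h_{m-1}=p\,(x+\alpha)$ and $h_m=p\,(x+\beta)$ with $\beta\le\alpha$, where $p$ is real-rooted with positive coefficients $p_0,\dots,p_{n-1}$ (set $p_{-1}=p_n=0$). The coefficient of $x^k$ is $\alpha p_k+p_{k-1}$ in $h_{m-1}$ and $\beta p_k+p_{k-1}$ in $h_m$. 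A direct expansion, for $k<l$, gives
\[
(\alpha p_k+p_{k-1})(\beta p_l+p_{l-1})-(\alpha p_l+p_{l-1})(\beta p_k+p_{k-1})=(\alpha-\beta)\,(p_kp_{l-1}-p_{k-1}p_l).
\]

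\textbf{Step 3: sign of the minor.} The factor $p_kp_{l-1}-p_{k-1}p_l$ is nonnegative for $k\le l-1$. Indeed, by Newton's inequalities the coefficient sequence of $p$ is log-concave with no internal zeros, so $p_{k-1}/p_k$ is nondecreasing in $k$ on the positive range; the boundary cases $k=0$ or $l=n$ give $p_kp_{l-1}\ge 0$ trivially. Since also $\alpha-\beta\ge 0$, the minor is nonnegative, so the ratio $(h_m)_k/(h_{m-1})_k$ is nondecreasing in $k$.

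\textbf{Step 4: conclude.} The ratio $g_k/f_k$ equals $c'/c$ times the product over $m$ of the ratios $(h_m)_k/(h_{m-1})_k$. A product of positive nondecreasing sequences is nondecreasing, so $g_k/f_k$ is nondecreasing in $k$. By the opening reduction, every $2\times 2$ minor is nonnegative, and the coefficient matrix is TP$_2$.

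The main point to get right is the orientation of $\preceq$. One must check that $f\preceq g$ forces $\beta_m\le\alpha_m$, rather than the reverse, so that the factor $\alpha-\beta$ has the correct sign. A check with $n=1$ confirms it: $f=x+2$, $g=x+1$ gives the matrix with rows $(2,1)$ and $(1,1)$, whose minor is $1\ge 0$. The log-concavity of $p$ is the only other ingredient, and it is standard.
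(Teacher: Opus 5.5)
Your proof is correct. The paper gives no argument of its own for this lemma: it imports it from Fisk as a black box. Its own work starts in Proposition~\ref{CondTP2}, which extends the lemma using the partial-fraction criterion of Lemma~\ref{Fiskpolylem}, a Cauchy--Binet factorization, and limiting arguments.

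Your route is different and self-contained. You move the roots of $f$ to those of $g$ one linear factor at a time, and observe that for a single swap $x+\alpha\mapsto x+\beta$ the $2\times 2$ minor factors exactly as $(\alpha-\beta)(p_kp_{l-1}-p_{k-1}p_l)$. You then finish with Newton's inequalities and a telescoping product of monotone ratios.

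This buys two things. First, you never use the full interlacing chain, only $\beta_m\le\alpha_m$ for the sorted root moduli, so you prove a stronger statement. For instance, $f=(x+3)^2$ and $g=(x+1)^2$ do not interlace, yet your argument shows their coefficient matrix is TP$_2$. Second, replacing $x+\alpha$ by the constant $1$ in the same computation gives the minor $p_kp_{l-1}-p_{k-1}p_l\ge 0$. So your method adapts directly to the case $\deg f_{i+1}=\deg f_i+1$, which the paper handles in Step~1 of Proposition~\ref{CondTP2} only through the auxiliary factor $x+t$ with $t\to\infty$, Lemma~\ref{Fiskpolylem}, and Cauchy--Binet. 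The citation, in turn, buys the paper brevity.

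One cosmetic slip: with $\alpha_1\ge\alpha_2\ge\cdots$, the zeros in non-increasing order are $r_k(f)=-\alpha_{n+1-k}$, not $-\alpha_k$. The chain $\alpha_1\ge\beta_1\ge\alpha_2\ge\beta_2\ge\cdots$ that you wrote is nevertheless the correct translation of $f\preceq g$, so nothing downstream changes.
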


We will establish a strengthened version of this result. 
Before that, we need the following two lemmas. 
The first is a classical result due to Fekete (see~\cite[Chapter 2]{Pin10} for instance).

\begin{lem}[{\cite[Fekete's criterion for TP$_2$]{Pin10}}]
\label{Fekete}
A matrix with non-negative entries is TP$_2$ if all 
$2\times2$ minors with consecutive  rows and consecutive columns are non-negative.
\end{lem}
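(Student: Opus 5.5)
We will prove the criterion under the standing assumption that all entries are \emph{strictly} positive. This is the setting in which it will be applied, namely coefficient matrices of polynomials with positive coefficients as in Lemma~\ref{fisklem}. Some such hypothesis is genuinely needed. For example, the $3\times 2$ matrix with rows $(0,1),(0,0),(1,0)$ has all consecutive $2\times 2$ minors equal to $0$, yet its minor on rows $\{0,2\}$ is $-1$. If zero entries must be allowed later, we will additionally require a ``staircase'' support: the nonzero entries of each row form a contiguous block, and the endpoints of these blocks move weakly to the right as the row index increases. In that case the argument below is repeated inside the support.

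The plan is to turn the $2\times 2$ condition into monotonicity of ratios. Write $A=[a(i,j)]$ with $a(i,j)>0$, and set
$$
\rho_i(j)=\frac{a(i,j+1)}{a(i,j)}.
$$
The consecutive minor on rows $i,i+1$ and columns $j,j+1$ is nonnegative, that is,
$$
a(i,j)\,a(i+1,j+1)\ge a(i,j+1)\,a(i+1,j).
$$
Dividing by the positive number $a(i,j)\,a(i+1,j)$ gives $\rho_{i+1}(j)\ge \rho_i(j)$. By induction on $k-i$, this yields $\rho_k(j)\ge\rho_i(j)$ for all $i<k$ and every $j$.

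Now take arbitrary rows $i<k$ and columns $j<l$. Telescoping along the columns gives
$$
\frac{a(k,l)}{a(k,j)}=\prod_{t=j}^{l-1}\rho_k(t)\ \ge\ \prod_{t=j}^{l-1}\rho_i(t)=\frac{a(i,l)}{a(i,j)}.
$$
This holds because the products consist of positive factors and each factor on the left dominates the corresponding factor on the right. Multiplying through by $a(i,j)\,a(k,j)>0$ gives
$$
a(i,j)\,a(k,l)-a(i,l)\,a(k,j)\ge 0,
$$
so every $2\times2$ minor is nonnegative. The $1\times1$ minors are nonnegative by hypothesis, hence $A$ is TP$_2$.

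The only delicate point is the presence of zero entries, where the ratios $\rho_i(j)$ are undefined; this is the step I expect to be the main obstacle. Under the staircase assumption, I would first check directly that any $2\times 2$ minor with $a(i,l)\,a(k,j)>0$ has all four entries inside the support, so the telescoping argument applies verbatim there. Every other minor is trivially nonnegative, because its subtracted term vanishes.
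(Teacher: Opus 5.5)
The paper gives no proof of this lemma; it is quoted from Pinkus. You are right that, with merely non-negative entries, the statement is false. Your $3\times 2$ example is a valid counterexample: its consecutive minors are $0$, but the minor on rows $0,2$ is $-1$. The correct form of Fekete's criterion for TP$_2$ assumes strictly positive entries. Your proof of that form is the standard one and is complete: the ratios $\rho_i(j)$ are monotone in $i$, and telescoping in $j$ gives every $2\times 2$ minor.

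Your remark that the lemma is only applied to positive matrices is not accurate, though. In Proposition~\ref{CondTP2} the paper invokes Lemma~\ref{Fekete} for the initial reduction to pairs of consecutive rows of the whole coefficient matrix. It invokes it again at the end of Step~2, where $a(i+1,0)=0$. So your staircase variant is the version actually needed, and its hypothesis does hold there. A real-rooted polynomial with non-negative coefficients equals $x^{m}$ times a polynomial with positive coefficients. Hence a nonzero $f_i$ is supported exactly on $\{m_i,\dots,\deg f_i\}$, and $f_i\preceq f_k$ forces both $m_k-m_i$ and $\deg f_k-\deg f_i$ to lie in $\{0,1\}$.

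Two points in that variant should be tightened.
\begin{itemize}
\item The telescoping uses every entry $a(m,t)$ with $i\le m\le k$ and $j\le t\le l$, not just the four corners. What you must check, when $a(i,l)\,a(k,j)>0$, is that this whole rectangle lies in the support. Writing $[\alpha_m,\beta_m]$ for the support of row $m$, this follows from $\alpha_m\le\alpha_k\le j<l\le\beta_i\le\beta_m$ for $i\le m\le k$.
\item You must require the rows to be nonzero, at least those lying between two nonzero rows, because the chain of ratio inequalities passes through every intermediate row. The rows $(1,2),(0,0),(1,1)$ satisfy your condition on the nonzero rows and have all consecutive minors equal to $0$. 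Yet the minor on rows $0,2$ is $-1$.
\end{itemize}
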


The second is based on the interlacing polynomials due to Fisk (see Lemma 1.20 and Remark 1.21 in \cite{Fisk}).

\begin{lem}
\label{Fiskpolylem}
Let $f(x)$ and $g(x)$ be polynomials with positive leading coefficients, where $g(x)$ has degree $n$ and roots $\{s_1, \dots, s_n\}$. 
If $f(x)$ has degree $n-1$ and we write
$$
f(x)=\sum_{r=1}^{n}c_r \frac{g(x)}{x+s_r},
$$
then $f(x) \preceq g(x)$ if and only if all $c_r\ge0$.
\end{lem}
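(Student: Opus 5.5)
The approach is to read off the coefficients $c_r$ by evaluating $f$ at the zeros of $g$, and to compare signs. Write the zeros of $g$ as $-s_1,\dots,-s_n$, ordered so that $-s_1\ge -s_2\ge\cdots\ge -s_n$, i.e.\ $r_k(g)=-s_k$. Let $a>0$ be the leading coefficient of $g$, so that $g(x)=a\prod_{k}(x+s_k)$ and $g(x)/(x+s_r)=a\prod_{k\ne r}(x+s_k)$. Comparing leading coefficients gives $a\sum_r c_r=\operatorname{lc}(f)>0$. This is consistent with both directions but will not be used further.

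I first treat the case where the zeros of $g$ are simple. Substituting $x=-s_r$ kills every term except the $r$-th, so
$$
f(-s_r)=c_r\,a\prod_{k\ne r}(s_k-s_r)=c_r\,g'(-s_r).
$$
Since $g$ has positive leading coefficient and simple real zeros, $\operatorname{sign} g'(-s_r)=(-1)^{r-1}$. The key identity is therefore $\operatorname{sign} f(-s_r)=(-1)^{r-1}\operatorname{sign} c_r$.

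For the direction ``$c_r\ge 0\Rightarrow f\preceq g$'', assume first that all $c_r>0$. Then $f$ strictly alternates in sign along $-s_1>-s_2>\cdots>-s_n$. By the intermediate value theorem it has a zero in each of the $n-1$ open gaps $(-s_{r+1},-s_r)$, and these are all of its zeros since $\deg f=n-1$. The zeros of $f$ thus lie strictly between consecutive zeros of $g$, which is exactly $f\preceq g$.

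For the converse, suppose $f\preceq g$. Then the number of zeros of $f$ that are $>-s_r$ is at most $r-1$, and it equals $r-1$ unless $f(-s_r)=0$. Hence $f(-s_r)$ is either $0$ or has sign $(-1)^{r-1}$, because $f$ has positive leading coefficient. By the key identity, $c_r\ge 0$.

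The main obstacle is the degenerate situations: some $c_r=0$ in the forward direction, or $g$ having repeated zeros. For $c_r=0$ I would argue directly. Then $f(-s_r)=0$, and the factor $(x+s_r)$ divides both $f$ and $g$. Cancelling it reduces the problem to the same statement for $\tilde f=f/(x+s_r)$ and $\tilde g=g/(x+s_r)$, whose partial-fraction coefficients are the remaining $c_k$; I then induct on $n$. One checks that inserting a common zero into an interlacing pair preserves interlacing. Alternatively, I could perturb $c_r\to c_r+\varepsilon$ and use that $\preceq$ is a closed condition, since zeros depend continuously on coefficients.

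For repeated zeros of $g$, the representation $\sum_r c_r\,g/(x+s_r)$ forces $f$ to vanish to order at least $m-1$ at a zero of multiplicity $m$. I would again divide out the common factor $\gcd(f,g)$, which reduces to the simple-zero case, and then reinsert the common factor. Checking that this reinsertion preserves both the interlacing relation and the sign conditions on the $c_r$ (here the $c_r$ attached to equal $s_r$ are not individually determined, only their sum) is the step I expect to need the most care. In that situation I would read the statement as asserting the existence of a nonnegative choice of the $c_r$.
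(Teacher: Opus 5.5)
Your proof is correct, and it is essentially the standard argument for this result. The paper gives no proof of its own; it defers to Fisk (Lemma~1.20 and Remark~1.21), and your evaluation $f(-s_r)=c_r\,g'(-s_r)$ combined with $\operatorname{sign} g'(-s_r)=(-1)^{r-1}$ is exactly the mechanism behind that citation. Your existence reading of the repeated-zero case is the right one: read literally, the ``only if'' fails, since for $g=(x+1)^2$ and $f=x+1=2\cdot\frac{g}{x+1}-\frac{g}{x+1}$ one has $f\preceq g$ with $c_2<0$. That reading is also what Step~1 of Proposition~\ref{CondTP2} actually uses, because there the $s_r$ may coincide. The step you flagged closes immediately via the characterization that, when $\deg g=\deg f+1$, $f\preceq g$ if and only if the number of zeros of $g$ in $(t,\infty)$ minus the number of zeros of $f$ in $(t,\infty)$ lies in $\{0,1\}$ for every real $t$; this condition is visibly unaffected by inserting or removing a common factor.
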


Now we are ready to give a generalization of Lemma~\ref{fisklem}.

\begin{prop}\label{CondTP2}
Let $(f_0(x), f_1(x), \dots,f_n(x))$ be an  interlacing polynomial sequence. 
Suppose that $f_i(x)$ has non-negative coefficients for each $i$. 
Then its coefficient matrix is TP$_2$.
\end{prop}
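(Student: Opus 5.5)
By Lemma~\ref{Fekete}, and since all entries are non-negative by hypothesis, it suffices to check the $2\times 2$ minors with consecutive rows and consecutive columns:
$$
a(i,j)\,a(i+1,j+1)-a(i,j+1)\,a(i+1,j)\ \ge 0,\qquad 0\le i,j\le n-1 .
$$
So I fix $i$ and set $f=f_i$, $g=f_{i+1}$. The interlacing hypothesis gives $f\preceq g$. Writing $f=\sum_j a_jx^j$ and $g=\sum_j b_jx^j$, the task is to show $a_jb_{j+1}\ge a_{j+1}b_j$ for all $j$.

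First I would dispose of the degenerate cases. If $f=0$ or $g=0$, the corresponding row is zero and the minor vanishes. Otherwise both polynomials have non-negative coefficients and only real zeros, so all their zeros are $\le 0$. I factor out the common power of $x$: let $f=x^{p}\tilde f$ and $g=x^{q}\tilde g$ with $\tilde f(0),\tilde g(0)>0$. Interlacing forces $|p-q|\le 1$, because the multiplicities of the zero $0$ in two interlacing polynomials differ by at most one. After a common shift of the column index, the minors reduce to the same inequality for the shifted polynomials, and the only new cases are those where one polynomial has one more zero at the origin than the other. That case is handled by the same argument once the zero at $0$ is counted among the $s_r$ below.

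Next, the main case: $\deg g=\deg f+1=m$. Write $g(x)=c\prod_{r=1}^m(x+s_r)$ with $c>0$ and $s_r\ge0$. By Lemma~\ref{Fiskpolylem}, $f=\sum_r c_r\,g_r$ with $g_r=g/(x+s_r)$ and every $c_r\ge0$. The minor expression $D_j(f,g)=a_jb_{j+1}-a_{j+1}b_j$ is linear in $f$, so it is enough to show $D_j(g_r,g)\ge0$ for each $r$. Since $g=(x+s_r)g_r$, the coefficients satisfy $b_j=s_r e_j+e_{j-1}$, where $e_j$ are the coefficients of $g_r$. Substituting,
$$
D_j(g_r,g)=e_j(s_re_{j+1}+e_j)-e_{j+1}(s_re_j+e_{j-1})=e_j^2-e_{j-1}e_{j+1}.
$$
This is $\ge0$ by Newton's inequalities, since $g_r$ is real-rooted with non-negative coefficients. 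It is also immediate from the TP$_2$ property of the Toeplitz matrix of a Pólya frequency sequence, already used in the proof of Theorem~\ref{TP_int}.

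The case $\deg f=\deg g=m$ is the one I expect to be the main obstacle, since Lemma~\ref{Fiskpolylem} is stated only for degree $m-1$. My plan there is to subtract a leading multiple. Set $\lambda=\mathrm{lc}(f)/\mathrm{lc}(g)>0$ and $h=f-\lambda g$, so $\deg h\le m-1$. The condition $f\preceq g$ with equal degrees means the zeros alternate with a zero of $f$ smallest. In that situation the partial fraction coefficients of $h/g$ are $c_r=h(-s_r)/g'(-s_r)=f(-s_r)/g'(-s_r)$, and the alternation makes these $\ge0$ (Fisk's Remark 1.21). So $f=\lambda g+\sum_r c_rg_r$ with all coefficients non-negative. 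By linearity, $D_j(f,g)=\lambda D_j(g,g)+\sum_r c_rD_j(g_r,g)$. Here $D_j(g,g)=0$, and each $D_j(g_r,g)\ge 0$ by the computation above. When $g$ has repeated roots, I would either perturb and pass to the limit, using continuity of the minors, or group the repeated factors directly.
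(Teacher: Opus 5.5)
Your proof is correct. In the case $\deg f_{i+1}=\deg f_i+1$ it follows the same route as the paper: both expand $f_i=\sum_r c_r f_{i+1}/(x+s_r)$ by Lemma~\ref{Fiskpolylem} and use linearity in the first row. The paper phrases this as Cauchy--Binet with the factor $\begin{pmatrix}c_1&\cdots&c_n&0\\0&\cdots&0&1\end{pmatrix}$.

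You differ in how you show the building-block minors $D_j(g_r,g)$ are non-negative. The paper does this indirectly. It passes to $(x+t)f_{i+1}$, applies Fisk's Lemma~\ref{fisklem} to the quotients $(x+t)f_{i+1}/(x+s_r)$, rescales, and lets $t\to\infty$. It cites Lemma~\ref{fisklem} again for equal degrees. Because that lemma needs positive coefficients, it treats zeros at the origin separately, by an $\varepsilon$-shift of all roots (Step~2). You instead compute $D_j(g_r,g)=e_j^2-e_{j-1}e_{j+1}$ and invoke Newton's inequalities. You also absorb the equal-degree case into the same decomposition, via the extra term $\lambda g$ with $D_j(g,g)=0$.

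Your route is self-contained and avoids limits and Lemma~\ref{fisklem} entirely. In your computation $s_r=0$ is harmless, so your preliminary reduction for zeros at the origin is not actually needed. The only point needing care is repeated roots, and grouping is the clean fix. If $f\preceq g$, then $f/g$ has only simple poles, so you sum over distinct roots. Each $g/(x+s)$ is still real-rooted with non-negative coefficients, so the same computation applies. The paper's route, in exchange, presents the result as a strengthening of Fisk's lemma, using that lemma as a black box.

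Your computation also gives more than you use. For any columns $j<l$ it yields $D_{j,l}(g_r,g)=e_je_{l-1}-e_{j-1}e_l\ge 0$, by the PF$_2$ property you mention. Since interlacing is assumed for every pair $i<k$, you get all $2\times 2$ minors directly and can dispense with Lemma~\ref{Fekete}. That is worth doing, because consecutive-minor criteria are delicate when zero entries occur. For example, $\begin{pmatrix}1&0&1\\1&0&0\end{pmatrix}$ has all consecutive $2\times 2$ minors equal to $0$ but is not TP$_2$.
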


\begin{proof}
By Lemma~\ref{Fekete},
to prove the coefficient matrix $A$ is TP$_2$, 
it suffices to prove that the submatrix of $A$ with the consecutive two rows
$$
A'=\mat{
a({i,0})&a({i,1})&\dots&a({i,n})\\
a({i+1,0})&a({i+1,1})&\dots&a({i+1,n})
}
$$
is TP$_2$.
We divide the proof into two steps.

Step 1: Assume that $a(i,0)>0$ and $a(i+1,0)>0$.
Without loss of generality, we assume that $a({i+1,n})>0$.
It has been shown that $\deg(f_i(x))=\deg(f_{i+1}(x))$ or $\deg(f_i(x))+1=\deg(f_{i+1}(x))$.
For the first case, it is obvious that $a({i,n})>0$ and the matrix is TP$_2$ by Lemma~\ref{fisklem}.
For the second case, we have $a({i,n})=0$ and $a({i,n-1})>0$.
Then the entries of $A'$ are strictly positive except $a({i,n})=0$.

Write
$$
f_{i+1}(x)=a({i+1,n})\,\prod_{r=1}^{n}(x+s_r),
\quad
0<s_1\le s_2\le \cdots\le s_n.
$$
Define $\widetilde{f}_{i+1}(x)=(x+t)f_{i+1}(x)$,
where $t>s_n$.
It is easy to check that
$$
\left( \frac{\widetilde{f}_{i+1}(x)}{x+s_1}, \frac{\widetilde{f}_{i+1}(x)}{x+s_2}, \dots \frac{\widetilde{f}_{i+1}(x)}{x+s_n}, \frac{\widetilde{f}_{i+1}(x)}{x+t}=f_{i+1}(x) \right)
$$
forms an interlacing polynomial sequence. 
All these polynomials have degree $n$ and their coefficients are positive. 
Therefore, by Lemma~\ref{fisklem}, their coefficient matrix $B_t$ is TP$_2$. 

For $r=1,2,\dots,n$, let
$$
h_r(x)=\frac{f_{i+1}(x)}{x+s_r}=\frac{\widetilde{f}_{i+1}(x)}{(x+t)(x+s_r)}.
$$
Then
$$
\frac{\widetilde{f}_{i+1}(x)}{(x+s_r)}=(x+t)h_r(x).
$$
Consider the coefficient matrix $C_t$ of
$$
\left(
\left(1+\frac{x}{t}\right)h_1(x),\ 
\left(1+\frac{x}{t}\right)h_2(x),\ 
\ldots,\ 
\left(1+\frac{x}{t}\right)h_n(x),\ 
f_{i+1}(x)
\right).
$$
Clearly, $C_t$ is obtained from $B_t$ by dividing the first $n$ rows by the positive scalar $t$. 
Since $B_t$ is TP$_2$, we have that $C_t$ is TP$_2$. 
As $t\to\infty$,
$$
\left(1+\frac{x}{t}\right)h_r(x)\longrightarrow h_r(x)
$$
coefficientwise for each $r$, while the last row of $C_t$ is independent of $t$. Therefore
$$
C_t\longrightarrow C
$$
entrywise. 
Since every entry and every $2\times2$ minor is a continuous function of the entries, the class of TP$_2$ matrices is closed under entrywise limits. 
Hence $C$ is TP$_2$. 
By Lemma~\ref{Fiskpolylem}, since $f_i(x) \preceq f_{i+1}(x)$, and $\deg(f_i(x))+1=\deg(f_{i+1}(x)),$ 
we have
$$
f_i(x)=\sum_{r=1}^{n}c_r \frac{f_{i+1}(x)}{x+s_r},
$$
where $c_r\ge0$. 
Thus we obtain that
$$
A'=
\begin{pmatrix}
c_1&c_2&\cdots&c_n& 0\\
0&0&\cdots&0&1
\end{pmatrix} C.
$$
By the Cauchy--Binet formula, the TP$_2$ of $A'$ follows that of $C$.

{Step 2: Suppose that $f_i(x)=x^{m_1} g_i(x)$ and $f_{i+1}(x)=x^{m_2} g_{i+1}(x)$,
where $g_i(0)\ne 0$ and $g_{i+1}(0)\ne 0$.} 
Then $m_1=m_2$ or $m_1+1=m_2$
by $f_i(x)\preceq f_{i+1}(x)$.
Without loss of generality, we only need to consider the case $a({i,0})> 0$. 
If $a(i+1,0)> 0$, then the matrix is TP$_2$.
If $a(i+1,0)=0$ and $a(i+1,1)>0$, then
$$
f_{i+1}(x)=a({i+1,n})\,x\, \prod_{r=2}^{n}(x+s_r),
\quad
0=s_1<s_2\le s_3\le \cdots\le s_n.
$$
We only consider the case $\deg(f_i(x))=n$ and the case $\deg(f_i(x))=n-1$ is similar. 
Write
$$
f_{i}(x)=a({i,n})\, \prod_{r=1}^{n}(x+t_r),
\quad
0\le t_1\le t_2\le t_3\le \cdots\le t_n.
$$
Let $\varepsilon > 0$ be a sufficiently small perturbation parameter and define two polynomials
\begin{equation*}
g_i(x) = a({i,n})\, \prod_{r=1}^{n}(x+t_r+\varepsilon)
\end{equation*}
and 
$$
g_{i+1}(x)=a({i+1,n})\,(x+\varepsilon)\, \prod_{r=2}^{n}(x+s_r+\varepsilon).
$$
Then $g_i(x)\preceq g_{i+1}(x)$.
Because every factor in the product has strictly positive coefficients,
by Step 1, the coefficient matrix
$$
A^{\varepsilon}=\mat{
a^{\varepsilon}({i,0})&a^{\varepsilon}({i,1})&\cdots&a({i,n})\\
a^{\varepsilon}({i+1,0})&a^{\varepsilon}({i+1,1})&\cdots&a({i+1,n})
}
$$
of $g_i(x)$ and $g_{i+1}(x)$ is TP$_2$.
Taking the limit as $\varepsilon \to 0$, we have
\begin{equation*}
\lim_{\varepsilon \to 0} a^{\varepsilon}(i,j) = a({i,j}),
\end{equation*}
and
\begin{align*}
&\lim_{\varepsilon \to 0} \left( a^{\varepsilon}(i,j) a^{\varepsilon}(i+1,j+1) - a^{\varepsilon}(i,j+1) a^{\varepsilon}(i+1,j) \right) \\
=~& a(i,j) a(i+1,j+1) - a(i,j+1) a(i+1,j) \ge 0.
\end{align*}
Then, by Lemma~\ref{Fekete}, we obtain that $A'$ is TP$_2$. This completes the proof.
\end{proof}

\begin{proof}[Proof of Theorem \ref{TP2ques}]
By Theorem 1.1 of~\cite{Ath22}, the coefficient matrix of the polynomial sequence
$$
\left(P_{d,0}(x),P_{d,1}(x), \dots, P_{d,d}(x)\right)
$$
is the transpose of $H_{\mathcal{F}}$. 
Since $\mathcal{F}$ has the interlacing property, this sequence is interlacing. 
By Proposition~\ref{CondTP2}, the coefficient matrix of this sequence, namely $H_\mathcal{F}^{T}$, is TP$_2$. 
As total positivity is preserved under transposition (see Proposition~1.2 in \cite{Pin10}), it follows that $H_\mathcal{F}$ is TP$_2$. 
This completes the proof.
\end{proof}

The $r$-colored barycentric subdivision $\mathcal{F}$ was introduced in~\cite{Ath14}.
The barycentric subdivision and the interval subdivision are recovered as the special cases $r=1$ and $r=2$, respectively. 
Athanasiadis and Tzanaki investigated the interlacing property of $H_\mathcal{F}$ for the $r$-colored barycentric subdivision.

\begin{lem}[{\cite[Theorem 5.3]{AT21}}]
Let $\mathcal{F}$ be the $r$-colored barycentric subdivision of $(d-1)$-dimensional simplicial complexes.
Then for each $d$ and $r$, $\mathcal{F}$ has the interlacing property.
\end{lem}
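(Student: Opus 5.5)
We plan to prove the interlacing property by induction on $d$, using a recurrence for the column generating functions $P_{d,k}(x)$ of $H_\mathcal{F}$ for the $r$-colored barycentric subdivision. Each step of the recurrence will be a linear map with polynomial coefficients that is known to preserve interlacing sequences. The base case $d=1$ (or $d=0$) is a direct check: the sequence consists of at most two polynomials of degree $\le 1$ with non-negative coefficients, and their roots can be compared by hand.

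\textbf{Step 1: the recurrence.} We first want an explicit combinatorial model for $p_d(i,k)$. By Athanasiadis' description of uniform triangulations, $P_{d,k}(x)$ is the $h$-polynomial of the restriction of $\mathcal{F}$ to a $(d-1)$-simplex relative to $k$ of its facets. For the $r$-colored barycentric subdivision it enumerates $r$-colored permutations of $[d]$ (elements of $\mathbb{Z}_r\wr\mathfrak{S}_d$) by a descent-type statistic, with a boundary condition depending on $k$. Inserting the letter $d+1$ with each possible color into such a word, and tracking how the statistic and the boundary condition change, should give a recurrence of the form
$$
P_{d+1,k}(x)=\sum_{j}\bigl(\alpha_{kj}x+\beta_{kj}\bigr)P_{d,j}(x),
$$
where the $\alpha_{kj},\beta_{kj}\ge 0$ are explicit integers depending on $d$, $r$, $j$, $k$. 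In fact only $j\in\{k-1,k\}$, or a short initial/final segment, should appear. As a cross-check we will compare this with the $r=2$ case, where Lemma~\ref{deco_polyB} gives $Q_{d,j}=E_2[(1+x)^dP_{d,j}]$. For general $r$ we expect the analogous identity with $E_r$ and $(1+x+\cdots+x^{r-1})^d$, from which the recurrence can also be extracted.

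\textbf{Step 2: preservation of interlacing.} We will use Br\"and\'en's characterization~\cite{Bra15} of matrices of polynomials preserving interlacing. Let $G=[g_{kj}]$ have entries with non-negative coefficients. Then $G$ maps interlacing sequences to interlacing sequences provided each $2\times2$ submatrix
$$
\begin{pmatrix} g_{kj}&g_{kl}\\ g_{mj}&g_{ml}\end{pmatrix},\qquad k<m,\ j<l,
$$
is itself interlacing-compatible in the sense of his theorem. For degree-$\le 1$ entries this reduces to finitely many inequalities on the $\alpha$'s and $\beta$'s. Concretely, these are the sign conditions $\alpha_{kj}\beta_{ml}\ge\alpha_{ml}\beta_{kj}$ in the appropriate directions, together with $g_{kl}\preceq g_{kj}$-type comparisons within rows and columns. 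Applying this to the inductive hypothesis that $(P_{d,0},\dots,P_{d,d})$ is interlacing gives the claim for $d+1$. One small adjustment is needed: the new sequence is longer by one, so we will either pad with $P_{d,d+1}:=0$, using the convention $0\preceq h$, or treat the last column separately.

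\textbf{Main obstacle.} The difficult step is verifying Br\"and\'en's $2\times2$ conditions for the specific coefficients. This means checking that the linear forms $\alpha_{kj}x+\beta_{kj}$ have roots $-\beta_{kj}/\alpha_{kj}$ that move monotonically in $k$ and in $j$. We will first try to make the coefficients as simple as possible, ideally of the shape $(a_k x+b_k)P_{d,k}+(c_kx+e_k)P_{d,k-1}$ with $a_k,b_k,c_k,e_k$ linear in $k$. The required inequalities would then become polynomial inequalities in $k$, $d$, $r$ that can be checked directly. If the raw recurrence involves too many $j$, we will instead factor the transformation $P_{d,\bullet}\mapsto P_{d+1,\bullet}$ as a product of simpler interlacing-preserving steps, in the spirit of the bidiagonal factorization of Theorem~\ref{key}, and verify each step separately.
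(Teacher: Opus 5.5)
\paragraph{Verdict.} The paper gives no proof of this lemma. It is quoted from \cite[Theorem 5.3]{AT21} and used only as input to Theorem~\ref{TP2ques}, so your argument has to stand on its own, and as written it does not.

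\paragraph{The recurrence is missing.} Your architecture is sensible: induction on $d$ through a linear recurrence whose coefficient matrix satisfies Br\"and\'en's interlacing-preservation criterion \cite{Bra15}. But the recurrence is the whole content of such a proof, and you only conjecture it. Moreover, the forms you predict are wrong.

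Inserting the largest letter $d+1$ does not give a relation $\sum_j(\alpha_{kj}x+\beta_{kj})P_{d,j}$. Whether an insertion creates a descent depends on the slot, and the number of slots of each kind depends on $\mathrm{des}(w)$. This produces derivative terms, exactly as in $A_{n+1}(x)=(1+nx)A_n(x)+x(1-x)A_n'(x)$ for Eulerian polynomials.

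A derivative-free recurrence comes instead from deleting the first letter, the one carrying the index $k$. For $r=1$, with the Brenti--Welker model $P_{d,j}(x)=\sum_{w\in\mathfrak{S}_{d+1},\,w(1)=j+1}x^{\mathrm{des}(w)}$, it reads
\[
P_{d+1,k}(x)=x\sum_{j<k}P_{d,j}(x)+\sum_{j\ge k}P_{d,j}(x).
\]

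\paragraph{The sparse shape you hope for fails, even for $r=1$.} At the boundary, a two-term recurrence would force $P_{d,0}\mid P_{d+1,0}$. But \eqref{exbar} gives $P_{2,0}=1+x$ and $P_{3,0}=1+4x+x^2$, and the latter equals $-2$ at $x=-1$. In the interior, take $P_{3,0}=1+4x+x^2$, $P_{3,1}=4x+2x^2$ and $P_{4,1}=8x+14x^2+2x^3$. Then $P_{4,1}=(ax+b)P_{3,1}+(cx+e)P_{3,0}$ has the unique solution $(a,b,c,e)=(-1,1,4,0)$, which has a negative coefficient.

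So you must verify Br\"and\'en's criterion for a genuinely full matrix. That criterion is a condition on all $2\times 2$ submatrices, quantified over all $\lambda,\mu>0$; it is not the finite list of coefficient inequalities you wrote down. For $r=1$ the required preservation is a known consequence of the criterion.

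For $r\ge 2$ you first need a precise colored model. Each column of $H_{\mathcal{F}^*}$ in the $d=4$ example sums to $192$, half of $|\mathbb{Z}_2\wr\mathfrak{S}_4|$, so your unspecified boundary condition has to be pinned down. After that you need the corresponding deletion recurrence and then the verification. None of these steps is carried out.

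\paragraph{Neither fallback closes the gap.}
\begin{itemize}
\item The factorization in Theorem~\ref{key} is of the scalar matrix $H_{\mathcal{F}}$ for fixed $d$, not of the passage from $d$ to $d+1$. It yields total positivity, which says nothing about real-rootedness, let alone interlacing. For example, the row $(1,1,1)$ is TP while $1+x+x^2$ has non-real zeros. The paper only uses the converse direction, interlacing $\Rightarrow$ TP$_2$.
\item The $E_r$ identity is only conjectured in your proposal for $r\ge 3$. Even for $r=2$ (Lemma~\ref{deco_polyB}) it gives no recurrence in $d$. You would still need to prove that $f\mapsto E_r[(1+x+\cdots+x^{r-1})^d f]$ preserves interlacing of this particular sequence.
\end{itemize}

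To get an actual proof, derive the colored deletion recurrence with explicit nonnegative coefficient polynomials and check Br\"and\'en's conditions for it. Alternatively, cite \cite[Theorem 5.3]{AT21} as the paper does.
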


The following follows immediately from Theorem~\ref{TP2ques}.
\begin{coro}
Let $\mathcal{F}$ be the $r$-colored barycentric subdivision of $(d-1)$-dimensional simplicial complexes.
Then for each $d$ and $r$,
the transformation matrix $H_\mathcal{F}$ is TP$_2$.
\end{coro}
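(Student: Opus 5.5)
The statement is the final Corollary: the transformation matrix of the $r$-colored barycentric subdivision is TP$_2$. The plan is a direct chaining of two results already in hand. By the lemma of Athanasiadis and Tzanaki (\cite[Theorem 5.3]{AT21}), for every $d$ and $r$ the column generating functions $\left(P_{d,0}(x),\dots,P_{d,d}(x)\right)$ of $H_\mathcal{F}$ form an interlacing sequence. That is exactly the hypothesis of Theorem~\ref{TP2ques}, which then gives that $H_\mathcal{F}$ is TP$_2$.

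Before invoking Theorem~\ref{TP2ques}, I would check two things.

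\begin{itemize}
\item \emph{Matching the notion of interlacing.} The statement in \cite{AT21} should be the same one used here, namely $P_{d,i}\preceq P_{d,j}$ for all $i<j$ in the index order of the columns. If \cite{AT21} instead records interlacing in the reverse order, I would use the symmetry $p_d(i,j)=p_d(d-i,d-j)$ of these matrices, which the $d=4$ examples visibly display. That symmetry permutes rows and columns both by reversal, and this preserves TP$_2$ because the minors are unchanged.
\item \emph{Sign condition for Proposition~\ref{CondTP2}.} Nonnegativity of the coefficients of each $P_{d,k}$ holds because the entries of $H_\mathcal{F}$ count faces or flag-type objects of the subdivision (as in \cite{Ath22}), so they are nonnegative integers.
\end{itemize}

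The only potential obstacle is this bookkeeping of conventions, namely whether ``interlacing'' in \cite{AT21} is with respect to the same ordering and the same $\preceq$ as Br\"and\'en's. Once that is aligned, the corollary is immediate from Theorem~\ref{TP2ques}.
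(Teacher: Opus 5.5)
Your proposal is correct and matches the paper, which gets the corollary immediately by combining the Athanasiadis--Tzanaki interlacing lemma with Theorem~\ref{TP2ques}. Your two checks are sensible bookkeeping that the paper leaves implicit: the ordering convention agrees with the paper's increasing-index convention, and the coefficients must be nonnegative, which Proposition~\ref{CondTP2} needs.
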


\begin{rem}
Another typical uniform subdivision is the antiprism triangulation, introduced by Izmestiev and Joswig~\cite{IJ03} to study combinatorially branched coverings of manifolds. 
Recently, Athanasiadis, Brunink and Juhnke-Kubitzke~\cite{ABJ22} studied the enumerative and algebraic properties of antiprism triangulations. 
In particular, from the results in Section~5 of~\cite{ABJ22}, we can obtain an explicit expression for the entries of the transformation matrix $H_\mathcal{F}$ as
$$
H_\mathcal{F}=
\left[ \sum_{r=0}^{d}\sum_{s=0}^{i} \binom{j}{s}\binom{d-j}{d-r-s}\binom{r}{i-s} \sum_{\ell\geq 0}(\ell!)^2 S(i-s,\ell)S(r-i+s,\ell) \right]_{0\leq i,j\leq d},
$$
where $S(a,b)$ denotes the Stirling number of the second kind.
It is natural to consider the interlacing property and total positivity of this
transformation matrix.
\end{rem}
%---------------------------------------------------------------------

\section*{Declaration of competing interest}
The authors declare no conflict of interest. 

\section*{Declaration of generative AI and AI-assisted technologies in the writing process}
The authors didn't use Generative AI and AI-assisted technologies in the writing process.

\section*{Acknowledgements}
This work was partially supported by the National Natural Science Foundation of China (No.12201100).
%-----------------------------------------


\begin{thebibliography}{00}

\bibitem{Ath14}
C.A. Athanasiadis, 
\textit{Edgewise subdivisions, local $h$-polynomials, and excedances in the wreath product $\mathbb{Z}_r \wr \mathfrak{S}_n$}, 
SIAM J. Discrete Math. 
28 (2014) 1479--1492.

\bibitem{AT21}
C.A. Athanasiadis, E. Tzanaki, 
\textit{Symmetric decompositions, triangulations and real-rootedness}, 
Mathematika. 
67 (2021) 840--859.

\bibitem{Ath22}
C.A. Athanasiadis, 
\textit{Face numbers of uniform triangulations of simplicial complexes}, 
Int. Math. Res. Not. IMRN. 
(2022) no.~20, 15756--15787.

\bibitem{ABJ22}
C.A. Athanasiadis,  J.-M. Brunink, M. Juhnke-Kubitzke,
\textit{Combinatorics of antiprism triangulations}, 
Discrete Comput. Geom.
68(1) (2022) 72--106.

\bibitem{AN20}
I. Anwar, S. Nazir, 
\textit{The $f$- and $h$-vectors of interval subdivisions}, 
J. Combin. Theory Ser. A. 
169 (2020) 105124, 22 pp.

\bibitem{AN21}
I. Anwar, S. Nazir, 
\textit{The $\gamma$- and local $\gamma$-vectors of interval subdivisions}, 
J. Algebraic Combin.
53(3) (2021) 677--699.

\bibitem{ASW52}
M.I. Aissen, I.J. Schoenberg, A. Whitney, 
\textit{On the generating functions of totally positive sequences. I}, 
J. Analyse Math. 
2 (1952) 93--103.

\bibitem{Bra15}
P. Br{\"a}nd{\'e}n,
\textit{Unimodality, log-concavity, real-rootedness and beyond},
Handbook of enumerative combinatorics. 
CRC Press, Boca Raton, FL, 2015, pp. 437--483.

\bibitem{Bre95}
F. Brenti, 
\textit{Combinatorics and total positivity}, 
J. Combin. Theory Ser. A. 
71 (1995) 175--218.

\bibitem{BW06}
F. Brenti, V. Welker,
\textit{$f$-Vectors of barycentric subdivisions},
Math. Z.
259 (2008) no.~4, 849--865.

\bibitem{BW09}
F. Brenti, V. Welker, 
\textit{The Veronese construction for formal power series and graded algebras}, 
Adv. in Appl. Math. 
42 (2009) 545--556.

\bibitem{DF09}
P. Diaconis, J. Fulman, 
\textit{Carries, shuffling, and an amazing matrix}, 
Amer. Math. Monthly. 
116 (2009) 788--803.

\bibitem{Fisk}
S. Fisk,
\textit{Polynomials, roots, and interlacing},
arXiv:math/0612833v2.

\bibitem{GP92}
M. Gasca, J.M. Pe{\~n}a,
\textit{Total positivity and Neville elimination},
Linear Algebra Appl.
165 (1992) 25-44.

\bibitem{GP96}
M. Gasca, J.M. Pe{\~n}a,
\textit{On factorizations of totally positive matrices},
in: M. Gasca, C.A. Micchelli (Eds.),
Total Positivity and Its Applications,
Kluwer Academic Publishers,
Dordrecht, The Netherlands, 1996, pp. 109–130.

\bibitem{GT04}
M.T. Gass{\'o}, J.R. Torregrosa, 
\textit{A totally positive factorization of rectangular matrices by the Neville elimination}, 
SIAM J. Matrix Anal. Appl. 
25 (2004) 986--994.

\bibitem{IJ03}
I. Izmestiev, M. Joswig, 
\textit{Branched coverings, triangulations, and 3-manifolds}, 
Adv. Geom. 
3 (2003) 191--225.

\bibitem{JN09}
M. Juhnke-Kubitzke, E. Nevo, 
\textit{The Lefschetz property for barycentric subdivisions of shellable complexes}, 
Trans. Amer. Math. Soc. 
361 (2009) 6151--6163.

\bibitem{Kar68}
S. Karlin,
\textit{Total Positivity, Volume 1},
Stanford University Press, 1968.

\bibitem{KMRR24}
Y. Khiar, E. Mainar, E. Royo-Amondarain and B. Rubio, 
\textit{Total positivity and accurate computations related to $q$-Abel polynomials}, 
J. Sci. Comput. 
101 (2024) no.~3, Paper No. 56, 22 pp.

\bibitem{MPR22}
E. Mainar, J.M. Pe{\~n}a, B. Rubio, 
\textit{Accurate computations with Wronskian matrices of Bessel and Laguerre polynomials}, 
Linear Algebra Appl. 
647 (2022) 31--46.

\bibitem{MW22}
J. Mao, Y. Wang, 
\textit{Proof of a conjecture on the total positivity of amazing matrices}, 
Adv. Appl. Math. 
140 (2022) 102387.

\bibitem{MWZ26}
L. Mu, C. Wang, B.-X. Zhu,
\textit{On the totally positive matrices related to refined Eulerian numbers},
Discrete Math. 
349 (2026), no.~11, Paper No. 115247.

\bibitem{MW26}
L. Mu, V. Welker, 
\textit{Total positivity and two inequalities by Athanasiadis and Tzanaki}, 
Ann. Comb. 
30 (2026) no.~1, 59--75.

\bibitem{NPT11}
E. Nevo, T.K. Petersen, B.E. Tenner, 
\textit{The $\gamma$-vector of a barycentric subdivision}, 
J. Combin. Theory Ser. A. 
118 (2011) 1364--1380.

\bibitem{Pin10}
A. Pinkus, 
\textit{Totally positive matrices}, 
Cambridge Tracts in Mathematics, 181, 
Cambridge Univ. Press, Cambridge, 2010.

\bibitem{Sta92}
R.P. Stanley,
\textit{Subdivisions and local $h$-vectors}, 
J. Amer. Math. Soc. 5 (1992) 805--851.

\bibitem{Wal88}
J.W. Walker, 
\textit{Canonical homeomorphisms of posets}, 
European J. Combin. 
9 (1988) no.~2, 97--107.

\end{thebibliography}
\end{document}